\newtheorem{thm}{Theorem}[section]
\newtheorem{cor}[thm]{Corollary}
\theoremstyle{lem}
\newtheorem{lem}[thm]{Lemma}
\newtheorem{prop}[thm]{Proposition}
\newtheorem{defn}[thm]{Definition}
\theoremstyle{rem}
\newtheorem{rem}[thm]{Remark}
\newtheorem{exe}[thm]{Example}
\newcommand{\G}{\overline{G}}
\newcommand{\R}{\mathbb{R}}
\newcommand{\U}{\overline{U}}
\newcommand{\M}{\overline{M}}
\newcommand{\A}{\overline{h}}
\newcommand{\V}{\overline{V}}
\newcommand{\Gr}{\textbf{Gr}}
\newcommand{\af}{\alpha}
\newcommand{\m}{{}^{-1}}
\newcommand{\bt}{\beta}
\newcommand{\gra}{\textbf{Gr}(\alpha)}
\DeclareMathAlphabet{\mathpzc}{OT1}{pzc}{m}{it}
\def\<{{\langle}}
\def\>{{\rangle}}
\title{\textbf{Partial Groupoid Actions on Smooth Manifolds}}
\author{V\'{\i}ctor Mar\'{\i}n $^{\text{1}}$\\
   \small $^{\text{1}}$Departamento de Matem\'{a}ticas y Estad\'{i}stica, Universidad del Tolima, Ibagu\'{e}, Colombia\\
   \small e-mail: vemarinc@ut.edu.co\\
   H\'{e}ctor Pinedo $^{\text{2}}$\\
   \small $^{\text{2}}$ Escuela de Matem\'{a}ticas, Universidad Industrial de Santander, Bucaramanga, Colombia\\
   \small  e-mail: hpinedot@uis.edu.co\\
   José L. Vilca Rodríguez $^{\text{3}}$ \footnote{The second named-author was supported by   Fundação de Amparo à Pesquisa do estado de São Paulo  (FAPESP), process n°: 2023/14066-5, and
the third-named author was supported by FAPESP,  process No. 2019/08659-8, and PRPI da Universidade de São Paulo, process No. 22.1.09345.01.2. He also thanks the Department of Mathematics of Universidad del Tolima for its warm hospitality during his visit.}\\
   \small $^{\text{3}}$ Intituto de Matemática e Estatística, Universidade de São Paulo, São Paulo, Brazil\\
   \small e-mail: jvilca@ime.usp.br}
\date{\today}
\begin{document}
\maketitle
\begin{abstract}
\noindent Given a smooth partial action $\alpha$ of a Lie groupoid $G$ on a smooth manifold  $M,$ we provide necessary and sufficient conditions for  $\alpha$  to be globalizable with smooth globalization.  As an application, we provide results on the differentiable structure of orbit and stabilizer spaces induced by $\alpha,$ which leads to other criteria for its globalization in terms of its orbit maps in the case that $\alpha$ is free and transitive. Further, under the assumption that $\alpha$  is  free and proper, we prove that there exists exactly one differentiable structure on the quotient structure of the orbit equivalence space $M/G$ such that the quotient map $\pi:M\to M/G$ is a submersion. 
\end{abstract}
\noindent
\textbf{2010 AMS Subject Classification:} Primary 20L05,  18F15. Secondary 58H05.\\
\noindent
\textbf{Key Words:}  Partial groupoid action,   globalization, smooth manifold, Lie groupo\-id, \\\-or\-bit equivalence relation.

\section{Introduction} 
The notion of the flow of a differentiable manifold likely emerged in the 1800s,  this concept constitutes a partial action of the additive group of real numbers~\cite[Example 1.2]{AB}. Flows of differentiable vector fields are crucial instances of local transformation groups, and the notion of a ``maximum local transformation group" described in R. Palais's memoir~\cite[p. 65]{Palais} essentially corresponds to the concept of a partial group action within a differentiable context. {The term {\em partial group action}  has been used in different but close senses. In our sense,  the concept of a partial group action on a $C^*$-algebra was gradually worked out in \cite{exel1994} and \cite{McClanahan} with the most general definition given in \cite{Exeltwisted}.
Since then, many applications of partial group actions  have been given, and recently  partial group actions on non-associative algebras, such as  Jordan, reductive or Lie,  have been studied in \cite{Cortes, DokS2, NYO, jose2022}. 
A natural question is whether a partial group action can be realized as restrictions of a corresponding collection of total
maps on some superspace. {To the best of our knowledge, this problem was ﬁrst addressed in the above-mentioned memoir by R. Palais for partial actions of Lie
groups on (non-necessarily Hausdorff) smooth manifolds (see~\cite[Theorem X, p. 72]{Palais}). Later, in the topological context, it was studied {independently} in  \cite{AB} and  \cite{KL}, where   the authors showed that for any continuous partial action  $\alpha$ of a topological group 
on a topological space $X,$  there is a topological space $Y$ and a continuous action $\beta$ of $G$ on $Y$ such that $X$ is a
subspace of $Y$ and $\alpha$ is the restriction of $\beta$ to $X.$ Such a space $Y$ is called a globalization of $X,$  it was also shown
that there is a minimal globalization $X_G$ called the enveloping space of $X.$ 

Partial group actions can be described in terms of premorphisms,  as explained in \cite{KL}, a partial action of a group $G$ on a set $X$ is a unital premorphism from $G$ to the inverse monoid $I(X),$ consisting of { all} bijections between subsets of $X.$ 
This perspective motivated the study of partial actions of other algebraic structures rather than groups, permitting the development of partial groupoid actions on sets, rings, topological spaces, and interactions between them.  In this new setting, the globalization problem has been considered and explored in different contexts, (see for instance \cite{ BP, BPP,  BPi, Gilbert,  MP, MP1,  Megre1, NY}), and as in the frame of partial group actions, it is observed that not every structural property of the ring/topological space endowed with a partial action,  is preserved by globalization.


Recently, Lie groupoids have been widespread in several areas of mathematics, and in recent years some of their “higher” versions have drawn much attention, particularly in the study of higher categorical structures, higher stacks, and higher gauge theory. Moreover the theory of Lie groupoids provides a convenient language in the development of the foundations of the theory of orbifolds and foliations \cite{M},  and as important examples of Lie groupoids we mention the so-called action groupoids induced by Lie group actions,  cover groupoids, submersion groupoids  and Lie groups. On the other hand, the theory of global groupoid actions on differential geometry appeared in  ~\cite{Dufour, Mackenzie, Mrcun},  and more recently in \cite{BNZ}. For an excellent overview of Lie groupoids, the interested reader may consult \cite{M}.

This work is a starting point for the study of partial actions of Lie groupoids on smooth manifolds and it is structured as follows. After the introduction, we present in   Section \ref{s2} the necessary background on groupoids, set-theoretical partial groupoid actions, and results on their globalizations that will be used in the rest of the paper. Moreover, given a groupoid $G$ we present the category   ${\bf Pact}(G)$ of partial actions of $G,$ at the end of this section we present in Theorem \ref{th1} a proof that the globalization of a transitive partial groupoid action is isomorphic in ${\bf Pact}(G)$ to certain left coset action, this result extends \cite[Theorem 2.6]{Choi} for the case of partial action of groups on sets, (see  Corollary \ref{ChoiLin}). 
Later, in Section \ref{s3} we introduce, in  Definition \ref{def_partialaction}, the notion of smooth partial action of a Lie groupoid $G$ on a smooth manifold $M,$ and study in  Lemma \ref{properties} and Propositions \ref{pro1} and \ref{differentiable_t}  topological and smooth properties of its enveloping space $M_G.$ In general,  the  space $M_G$  is not  Hausdorff, not even when $G$ is a  group, and in \cite[Proposition 1.2]{AB}, the author presented a characterization for the enveloping space to be Hausdorff, this idea that was naturally extended to the partial groupoid setting in \cite[Corollary 4.5]{MP}, in this sense, we give in Theorem \ref{th2}   necessary and sufficient conditions for  the smooth partial action to have  smooth globalization.  
In the first part of our last Section,   we deal with orbit maps and orbit equivalence spaces. Here we show, in Proposition  \ref{globalrank},  that any groupoid action has a constant rank,  a result that gives consequences on the manifold structure of the stabilizer and orbits of a smooth partial action. 
Later  we present, in Theorem \ref{thr1_smooth}, a criteria for the existence of a globalization of a free and  transitive smooth partial action. At the end of the work, we show in Theorem \ref{th3} and Corollary \ref{cf} that under the assumption that a graph open smooth partial action $\alpha$ of $G$ on $M$ to be free and proper, then there is exactly one differentiable structure on the quotient  $M/G$  such that the quotient map $\pi_G : M \to M/G$ is a submersion, provided that either $\alpha$ has a smooth globalization or the orbit map $\alpha^x:G^x\to M$ has a constant rank.

\section{Preliminary notions}\label{s2}
A groupoid is  defined as a small category where each morphism is invertible.  But also can be considered as an algebraic structure, which is defined by a set of axioms. For convenience, we adopt the algebraic axiomatic definition of a groupoid. Thus a  {\em groupoid} is a set $G$ endowed with a partial binary operation $(g,h)\mapsto gh$, i.e. whose domain is a subset of $G\times G$, which satisfy the following conditions for all $g,h,k\in G$: 
\begin{enumerate}
    \item $(gh)k$ exists if and only if $g(hk)$ exists, and in this case $(gh)k=g(hk);$
    \item $(gh)k$ exists if and only if $gh$ and $hk$ exist;
    \item for each $g\in G$ there exist unique elements $d(g),r(g)\in G$ such that $gd(g)$ and $r(g)g$ exist and $gd(g) = g = r(g)g;$
    \item for each $g\in G$ there exists an element $g^{-1}\in G$ such that $d(g)=g^{-1}g$ and $r(g) = gg^{-1}$.
\end{enumerate}

It follows that the element $g^{-1}$ is unique, $(g^{-1})^{-1}=g$ and $d(g^{-1})=r(g).$   Also,  the element $gh$ exists if and only if $h^{-1}g^{-1}$ exists, which,  in turn, is equivalent to $r(h)=d(g)$. We denote by $G^2$ the set of pairs $(g,h)\in G\times G$ such that $d(g)=r(h)$. An element $e\in G$ is  an {\em identity} of $G$ if $e=d(g)=r(g^{-1}),$ for some $g\in G$. The set of identities of $G$ is denoted by $G_0$. Moreover, for any $e\in G_0,$ the set $G_e:=r\m(e)\cap d\m(e)$  is  a group,  called the {\em isotropy
(or principal) group associated to $e.$} We recall the next:

\begin{defn}\label{ny}{\rm
A  (set-theoretic) \emph{partial action} $\alpha$ of a groupoid $G$ on a  set $X$ is a pair $\alpha=(X_g, \alpha_g)_{g\in G},$ where $\{X_e\}_{e\in G_0}$ is a family of pairwise disjoint sets,   for each $g\in G, X_g\subseteq  X_{r(g)}\subseteq X$  and $\alpha_g:X_{g^{-1}}\rightarrow X_g$ are bijections such that for every $(g,h)\in G^2$:
\begin{enumerate}
\item[(i)] $X=\bigsqcup_{e\in G_0}X_e$ and  $\alpha_e$ is the identity map ${\rm id}_{X_e}$ of $X_e,$ for all $e\in G_0,$
\item[(ii)] $\alpha_{g}(X_{g^{-1}}\cap X_{h})\subseteq X_{gh}$;
\item[(iii)] $\alpha_g(\alpha_h(x))=\alpha_{gh}(x)$, for every $x\in \alpha_h^{-1}(X_{g^{-1}}\cap X_h)$.
\end{enumerate}
We say that $\alpha$ is {\it global} if, $\alpha_{gh}=\alpha_g\circ \alpha_h, $ for any $(g,h)\in G^2.$ }
\end{defn}
If $\alpha$ is a partial action of $G$ on $X$ according to Definition \ref{ny}, then the condition $X=\bigsqcup\limits_{e\in G_0}X_e$ is equivalent to the existence of a surjective mapping $p:X\to G_0$
such that $X_e=p^{-1}(e).$ {T}he map $p$ is known as  the {\it anchor} or  the {\it moment map} {of $\alpha$,} and it is related to the notion of global groupoid action, in which the groupoid acts on a fibred set   (see for instance \cite{L}).  On the other hand, condition (iii) implies that $\alpha_{g^{-1}}=\alpha_g^{-1}$. Moreover, condition (ii) is equivalent to the equality $\alpha_g(X_{g^{-1}}\cap X_h)=X_g\cap X_{gh}$ for all $(h,g)\in G^2$.  Note that if  one defines the composition $\alpha_g\circ \alpha_h$ on $X_{h^{-1}}\cap X_{(gh)^{-1}}$, which is the largest possible domain on which it makes sense, then conditions (ii) and (iii) say that $\alpha_{gh}$ is an extension of $\alpha_g\circ \alpha_h,$ for any pair $(g,h)\in G^{2}.$ 

\begin{rem} {\rm The definition of a partial groupoid action on a set has been presented in a different way~\cite{Gilbert, GY}. The main distinction between their definition and ours is in condition~(i); where the authors in these works do not require that $X=\bigsqcup_{e\in G_0}X_e.$ 
 We prefer   Definition~\ref{ny} for the following reasons: 
\begin{enumerate}
\item  When $G$ is a group, our definition coincides with the classical notion of partial group actions on sets \cite[Definition 1.2]{E}. 
\item Condition~(i)  implies that $r(g)=r(h)$ whenever $X_{g}\cap X_h\neq \emptyset$ and this property allows us to extend certain properties of the partial actions to their globalization (as shown in Lemma~\ref{lema_globaltransi}). 
\item Definition~\ref{ny}  is consistent with the standard notion of (global) Lie groupoid action on differential geometry (see~\cite[Definition 1.6.1]{Mackenzie},~\cite[p. 125]{Mrcun}, \cite[Definition 7.1.20]{Dufour}), which arises naturally in the context of fiber bundles on manifolds, and with the notion of (global) action of categories~\cite[p. 332]{L}.
\end{enumerate}}
\end{rem}

\subsection{Induced partial actions and globalization}
Let $\beta=(Y_g, \beta_g)_{g\in G}$ be a partial action of $G$ on a set $Y$. A subset $X$ of $Y$ is called {\em $\beta$-invariant} if  $\beta_g(X\cap Y_{g^{-1}})\subseteq X,$ or equivalently, $\beta_g(X\cap Y_{g^{-1}})=X\cap Y_g,$
for all $g\in G$. Thus  setting $X_g=X\cap Y_g$ and $\alpha_g=\beta_g|_{X_{g^{-1}}}$ we obtain that  $\alpha=(X_g, \alpha_g)_{g\in G}$ is a partial action of $G$ on $X$. 
Now, assume that $X$ is an arbitrary subset of $Y$. Let 
\begin{equation}\label{alfares} X_g=X\cap \beta_g(X\cap Y_{g^{-1}})\,\,\,{\rm  and }\,\,\,\alpha_g=\beta_g|_{X_{g^{-1}}},\end{equation} it is not difficult to see that  $\alpha=(X_g, \alpha_g)_{g\in G}$  is a partial action of $G$ on $X$, which is called the  {\em restriction } of $\beta$ to $X$.  We proceed with the next.

\begin{prop}\label{rest} {\rm Let $\beta=(Y_g, \beta_g)_{g\in G}$ be a global  action of $G$ on a set $Y$  and $X\subseteq Y$.  Consider  $\overline{Y}_g=\bigcup\limits_{r(h)=r(g)}\beta_h(X_{d(h)}),$ where   $X_g$ is defined by \eqref{alfares}, for each $g\in G$. Then, the set $\overline{Y}=\bigsqcup\limits_{e\in G_0} \overline{Y}_e$ is the  smallest  $\beta$-invariant subset of $Y$ such that $X\subseteq \overline{Y}$ {and $\overline{Y}_e\subseteq Y_e$}.  Moreover,  $\beta$ induces a global action on $\overline{Y}$, whose restriction to $X$ coincides with the restriction of $\beta$ to $X$. }
\end{prop}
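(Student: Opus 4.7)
\medskip

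\noindent\textbf{Proof proposal.} The plan is to verify the four required properties of $\overline{Y}$ in order: it contains $X$, it is $\beta$-invariant, it is minimal with these two properties, and the restriction of the induced global action recovers the original restriction to $X$.

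First, I would unpack the definition of $\overline{Y}_g$. Since $\beta$ is global, $\beta_e = {\rm id}_{Y_e}$ for each $e \in G_0$, so the set defined by \eqref{alfares} satisfies $X_e = X \cap Y_e$. Hence
\[
\overline{Y}_g \;=\; \bigcup_{r(h)=r(g)} \beta_h\bigl(X \cap Y_{d(h)}\bigr) \;\subseteq\; Y_{r(g)},
\]
so $\overline{Y}_g$ depends only on $r(g)$, and $\overline{Y} = \bigsqcup_{e\in G_0}\overline{Y}_e$ is a genuine disjoint union inside $Y = \bigsqcup_{e\in G_0} Y_e$. The inclusion $X \subseteq \overline{Y}$ is immediate: for $x \in X$ there is a unique $e \in G_0$ with $x \in Y_e$, and taking $h = e$ in the union defining $\overline{Y}_e$ gives $x = \beta_e(x) \in \beta_e(X \cap Y_e) \subseteq \overline{Y}_e$.

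Next, I would establish $\beta$-invariance, which is the only step requiring a small computation. For $g \in G$, since $Y_{g^{-1}} = Y_{d(g)}$ and $\overline{Y}_{d(g)} \subseteq Y_{d(g)}$, we have $\overline{Y} \cap Y_{g^{-1}} = \overline{Y}_{d(g)}$. Using the globality of $\beta$,
\[
\beta_g\bigl(\overline{Y}_{d(g)}\bigr) \;=\; \bigcup_{r(h)=d(g)} \beta_g\beta_h\bigl(X \cap Y_{d(h)}\bigr) \;=\; \bigcup_{r(h)=d(g)} \beta_{gh}\bigl(X \cap Y_{d(gh)}\bigr).
\]
Setting $h' = gh$, as $h$ ranges over elements with $r(h) = d(g)$ the element $h'$ ranges over those with $r(h') = r(g)$; thus the right-hand side equals $\overline{Y}_{r(g)}$. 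This yields $\beta_g(\overline{Y}_{d(g)}) = \overline{Y}_{r(g)}$, which proves $\beta$-invariance and simultaneously shows that $\beta$ restricts to a well-defined global action on $\overline{Y}$ (the bijection property of each restricted $\beta_g$ follows from the same equality applied to $g^{-1}$).

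For minimality, suppose $Z \subseteq Y$ is $\beta$-invariant and contains $X$. Any $y \in \overline{Y}_{r(g)}$ has the form $y = \beta_h(x)$ with $x \in X \cap Y_{d(h)} \subseteq Z \cap Y_{h^{-1}}$; by invariance of $Z$, $y \in Z$. Hence $\overline{Y} \subseteq Z$. Finally, since $X \subseteq \overline{Y}$ we have $X \cap \overline{Y}_{g^{-1}} = X \cap \overline{Y} \cap Y_{g^{-1}} = X \cap Y_{g^{-1}}$, so the restriction to $X$ of the induced action on $\overline{Y}$ coincides on the nose with the restriction of $\beta$ to $X$ given by \eqref{alfares}. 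No step is really delicate; the only point that needs care is the reindexing $h \mapsto gh$ in the invariance computation, which is legal precisely because $\beta$ is global.
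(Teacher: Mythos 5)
Your proof is correct and follows essentially the same route as the paper's: the identification $X_{d(h)}=X\cap Y_{d(h)}$, the reindexing $h\mapsto gh$ for invariance, and the containment argument for minimality are all identical. You additionally spell out the inclusion $X\subseteq\overline{Y}$ and the final claim about the restriction to $X$, which the paper leaves implicit; both details check out.
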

\begin{proof}It is clear that $X\subseteq \overline{Y}\subseteq Y.$ To check that $\overline{Y}$ is $\beta$-invariant, take $g\in G$ then $Y_{g^{-1}} \cap \overline{Y}=Y_{d(g)} \cap \overline{Y}=\overline{Y}_{d(g)},$ and 
\begin{align*} \beta_{g}(Y_{g^{-1}} \cap \overline{Y})=\beta_g(\overline{Y}_{d(g)})=\bigcup_{r(h)=d(g)}\beta_{gh}(X_{d(gh)})=\bigcup\limits_{r(l)=r(g)}\beta_{l}(X_{d(l)})=\overline{Y}_{r(g)}\subseteq \overline{Y},
\end{align*} 
since any $l\in G$ with $r(l)=r(g)$ can be written as $gh$ with $h=g^{-1}l$. This shows that $\overline{Y}$ is $\beta$-invariant. Further,  if $S\subseteq Y$ is a $\beta$-invariant subset of $Y$ such that  $X\subseteq S$,  we have that 
$$\beta_h(X_{d(h)})=\beta_h(X\cap Y_{h^{-1}})\subseteq\beta_h(S\cap Y_{h^{-1}})=S\cap Y_h;$$ and then $\overline{Y}_g=\bigcup\limits_{r(h)=r(g)}\beta_h(X_{d(h)})\subseteq S\cap Y_g$. Hence $\overline{Y}=\bigsqcup\limits_{e\in G_0} \overline{Y}_e\subseteq \bigsqcup\limits_{e\in G_0}(S\cap Y_e)=S$, as required.
\end{proof}
{ Given  Proposition \ref{rest}, it is natural to ask if every partial groupoid action on a set $X$ can be obtained as a restriction to $X$ of a global action on a set $Y \supseteq X$, such that $Y$ is minimal like $\overline{Y}$ above.} This problem is referred to as the globalization problem and has been treated in several contexts. To state it precisely, we  give the next.

\begin{defn}\label{mor}{\rm Let  $\alpha =(X_g, \alpha_g )_{g\in G}$ and $\widetilde\alpha =(\widetilde X_g, \widetilde\alpha_g)_{g\in G}$ be  partial actions of $G$ on the sets  $X$ and $\widetilde X$, respectively. A {\em homomorphism or ($G$-map)} between  $\alpha$ and $\widetilde\alpha$ is a map  $\varphi:X\to \widetilde X$ such that:
\begin{enumerate}
\item [(i)]  For every $g\in G$ and $x\in X_g, \varphi(x)\in \widetilde X_g ;$ 
\item [(ii)]  $\varphi\circ\alpha_g=\widetilde\alpha_g\circ\varphi$ on $X_{g^{-1}}.$
\end{enumerate} An  invertible homomorphism is called  an {\em isomorphism} {\em (or equivalence)}, in this case we say that $\alpha$ and $\tilde \alpha$ are equivalent.}
\end{defn} 
For instance, if $\beta$ is a global action of $G$ on a set $Y$, $X$ is a subset of $Y$, and $\alpha$ is a restriction of $\beta$ to $X$, then the inclusion $X\to Y$ is a $G$-map between $\alpha$ and $\beta$.  Moreover,  using Definition \ref{mor} one obtains the category ${\bf Pact}(G)$ of partial actions of $G,$ whose objects are the partial actions and morphism described above. Several structural properties of this category have been presented in \cite{MP1} and \cite{NY}. %

Let $\alpha$ be a partial action of $G$ on $X$. A global action $\beta$ of $G$ on a set $Y$ is a {\em globalization} for $\alpha$ if there exists a   monomorphism $i: X\to Y$ between $\alpha$ and $\beta$ such that the $\beta$-orbit of $i(X)$, i.e. the smallest $\beta$-invariant subset of $Y$ containing $i(X)$, coincides with $Y$. More explicitly,  in view of Proposition \ref{rest}, we have the next.

\begin{defn}\label{defglobaSM1}{\rm  A global action
$\bt=(Y_g, \bt_g)_{g\in G}$ of  a groupoid $G$ on a set $Y$ is
a {\it globa\-lization} of a partial action
$\af=(X_g, \alpha_g )_{g\in G}$ of $G$ on $X,$ if there exists an injective map $i:X \to Y$ such
that:
\begin{enumerate}\renewcommand{\theenumi}{\roman{enumi}}   \renewcommand{\labelenumi}{(\theenumi)}

\item $i(X_{g})=i(X) \cap \beta_g(i(X)\cap Y_{g^{-1}})$;
\item $\bt_g(i(x))=i(\af_g(x))$, for all $x\in X_{g^{-1}}$;
\item $Y_g=\bigcup\limits_{r(h)=r(g)}\bt_h(i(X_{d(h)}))$.
\end{enumerate}}
\end{defn}
It is known that a globalization of a partial groupoid action on a set always exists and it is unique up to isomorphism~\cite[Theorem 4]{NY}. Moreover, 
it follows from Definition~\ref{defglobaSM1} that if the action $\beta$ of $G$ on $Y$ is a globalization of the partial action $\alpha$ of $G$ on $X$, then the map $i:X\to i(X)$ is an isomorphism in ${\bf Pact}(G)$ between $\alpha$ and the restriction of $\beta$ to $i(X).$ Therefore we can assume, without loss of generality,  that $X$ is a subset of $Y$ and that the embedding $i: X\to Y$ of Definition~\ref{defglobaSM1} is given by the inclusion. 

\subsection{Transitive partial groupoid actions}
Let $\af=(X_g, \alpha_g )_{g\in G}$  be a   partial action of  a groupoid $G$ on a set $X$. In $X$ we define the following relation: 
\begin{equation}\label{orbig}
x\sim y\text{ if and only if  there is }  g\in G\text{ such that }x\in X_{g^{-1}} \text{ and }y=\alpha_g(x).\end{equation}
 It is clear that $\sim$ is reflexive and symmetric. To show that $\sim$ is an equivalence relation,  consider $x,y,z\in X$ so that $x\sim y$ and $y\sim z$. Then, there exist $g,h\in G$ so that $x\in X_{g^{-1}}, y\in X_{h^{-1}}$ and $y=\alpha_g(x), z=\alpha_h(y)$. Thus $y\in X_{h^{-1}}\cap X_g$, which implies $d(h)=r(g)$. Hence, 
$$y=\alpha_g(x)\in X_{h^{-1}}\cap X_g=\alpha_g(X_{g^{-1}}\cap X_{(hg)^{-1}}),$$
yielding $x\in X_{g^{-1}}\cap X_{(hg)^{-1}}$. Therefore $z=\alpha_h(y)=\alpha_h\alpha_g(x)=\alpha_{hg}(x),$ and  $x\sim z$, as required. The relation $\sim$ is called the {\it orbit equivalence relation},  the equivalence class of an element $x$ in $X$ is called
 the  {\em orbit} of $x$.

\begin{rem} {\rm If we remove condition (i) in  Definition \ref{ny} of partial groupoid action, then the orbit relation \eqref{orbig} is not necessarily transitive. Indeed, consider the groupoid $G=\{g,g\m, h,h\m, e,f\},$ where $e=d(g)=r(g)$ and $f=d(h)=r(h).$ Then $G$ acts partially (in the sense of \cite{Gilbert, GY}) on the set $X=\{x_1,x_2,x_3\}$ by 
\begin{itemize}
\item $X_{g\m}=\{x_1\}, X_{g}=\{x_2\}, X_{e}=\{x_1, x_2\}, \alpha_{g}(x_1)=x_2, \alpha_{g\m}=\alpha\m_{g}, \alpha_{e}={\rm id}_{X_e};$
\item $X_{h\m}=\{x_2\}, X_{h}=\{x_3\}, X_{f}=\{x_2, x_3\}, {\alpha_{h}}(x_2)=x_3, {\alpha_{h\m}}=\alpha\m_{h}, \alpha_{f}={\rm id}_{X_f}.$
\end{itemize}
Then $x_1\sim x_2, x_2\sim x_3$ but $x_1\nsim x_3.$}
\end{rem}

 Let $G^x=\{g\in G\mid x\in X_{g^{-1}}\}$, then 
 \begin{equation}\label{or}G^x\cdot x:=\{\alpha_g(x)\in X\mid g\in G^x\},\end{equation} 
 coincides with the orbit of $x$.  Moreover,   
\begin{equation}\label{om}\alpha^{x}:G^{x}\ni g\mapsto \alpha_g(x)\in X\end{equation} is called  the  {\em orbit map}.  The set $${\rm stab}_\alpha(x)=\{g\in G\mid x\in X_{g^{-1}} \mbox{ and }\alpha_g(x)=x\}= (\alpha^x)^{-1}(x),$$
is referred as the {\em  stabilizer of $x$}, and this is a subgroup of the principal group $G_{p(x)},$ where $p$ is the anchor map  of $\alpha$.

\begin{defn}{\rm
A partial action $\af=(X_g, \alpha_g )_{g\in G}$ of $G$ on $X$ is called: 
\begin{enumerate}
    \item[(i)] {\em transitive}, if there exists $x\in X$ such that $G^x\cdot x=X$; and 
    \item[(ii)] {\em free}, if ${\rm stab}_\alpha(x)=\{p(x)\}$ for all $x\in X$.
\end{enumerate}}
\end{defn}

One note that if $\alpha=(X_g, \alpha_g)_{g\in G}$ is a partial action of $G$ on $X$, the restriction  $\alpha^{(e)}=(X_g, \alpha_g)_{g\in G_e}$ is a partial action of the isotropy group $G_e$ on the set $X_e,$ for each $e\in G_0$.  Moreover, it is easy to see that if $\alpha$ is transitive, then  $\alpha^{(e)}$ is also transitive. Also, given $x\in X_e$ we have that ${\rm stab}_\alpha(x)\cap G_e={\rm stab}_{\alpha^{(e)}}(x)$. So if $\alpha$ is free,   the induced partial action $\alpha^{(e)}$ is free.

Now we show that the globalization of a transitive (resp. free) partial action is transitive (resp. free). This extends a result of Choi and Lim \cite[Proposition 2.4]{Choi} for transitive partial group actions to the groupoid setting.

\begin{lem}\label{lema_globaltransi} {\rm Let  $\af=(x_g, \alpha_g )_{g\in G}$  be a  partial action of  groupoid  $G$ on  a set $X$. Suppose that the global action  
$\bt=(Y_g, \bt_g)_{g\in G}$ of $G$ on $Y$ is a globalization for $\alpha$. Then, the following assertions hold:
\begin{enumerate}
    \item[(1)] $\alpha$ is transitive, if and only if, $\beta$ is transitive;
    \item[(2)] $\alpha$ is free, if and only if, $\beta$ is free.
\end{enumerate}}
\end{lem}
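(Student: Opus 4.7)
The proof systematically exploits the two defining identities of a globalization from Definition~\ref{defglobaSM1}: the restriction formula
\[
X_g = X \cap \beta_g(X \cap Y_{g^{-1}}),
\]
which moves information from $\beta$ back to $\alpha$, and the saturation formula $Y_g = \bigcup_{r(h)=r(g)}\beta_h(X_{d(h)})$, which writes every element of $Y$ as $\beta_h$ applied to some element of $X$. I will also use, tacitly, that $\beta$ being global gives $\beta_{gh}=\beta_g\circ\beta_h$ whenever $(g,h)\in G^2$, together with the disjointness $X=\bigsqcup_{e\in G_0} X_e$ from Definition~\ref{ny}(i).

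For transitivity, direction $(\Rightarrow)$: fix $x\in X$ with $G^x\cdot x = X$. Given $y\in Y$, the saturation formula writes $y=\beta_h(x')$ with $x'\in X_{d(h)}$; applying transitivity of $\alpha$ produces $k\in G^x$ with $\alpha_k(x)=x'$, and disjointness of $\{X_e\}_{e\in G_0}$ applied to $x'\in X_k\cap X_{d(h)}$ forces $r(k)=d(h)$, so $hk$ is composable and $\beta_{hk}(x)=\beta_h(\alpha_k(x))=y$, i.e.\ the $\beta$-orbit of $x$ is all of $Y$. For $(\Leftarrow)$, let $y_0\in Y$ satisfy $G^{y_0}\cdot y_0 = Y$ and pick any $x_0\in X$; for an arbitrary $x'\in X$ write $x_0=\beta_{g_0}(y_0)$ and $x'=\beta_{g'}(y_0)$ and set $g=g'g_0^{-1}$. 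Then $\beta_g(x_0)=x'$, and plugging $z=x'\in X\cap Y_g$ into the restriction formula yields $x_0\in X_{g^{-1}}$, so $\alpha_g(x_0)=x'$ and $\alpha$ is transitive with base $x_0$.

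For freeness, direction $(\Leftarrow)$ is immediate: every $g\in{\rm stab}_\alpha(x)$ satisfies $\beta_g(x)=\alpha_g(x)=x$ with $x\in X\subseteq Y$, so $g\in{\rm stab}_\beta(x)$ and freeness of $\beta$ forces $g\in G_0$. For $(\Rightarrow)$, assume $\alpha$ free and suppose $\beta_g(y)=y$ for some $y\in Y_e$ and $g\in G_e$. Write $y=\beta_h(x)$ with $x\in X_{d(h)}$; the fixed-point equation becomes $\beta_{h^{-1}gh}(x)=x$, and $g':=h^{-1}gh$ lies in $G_{d(h)}$. Since $\beta_{g'}(x)=x\in X$ and $x\in X\cap Y_{r(g')}=X_{d(h)}$, the restriction formula forces $x\in X_{(g')^{-1}}$, hence $g'\in{\rm stab}_\alpha(x)$. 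Freeness of $\alpha$ collapses $g'$ to $d(h)$, whence $g=h\,d(h)\,h^{-1}=r(h)\in G_0$.

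The main technical hurdle, modest but pervasive, will be keeping track of source and range conditions so that every composition ($hk$, $g'g_0^{-1}$, $h^{-1}gh$) is actually defined, and extracting the correct $X_{g^{-1}}$-membership at each step from $X_g=X\cap\beta_g(X\cap Y_{g^{-1}})$; the disjointness $X=\bigsqcup X_e$ is what makes the source/range matchings automatic, and it is precisely the point where our chosen Definition~\ref{ny} pays off as advertised in the remark following it.
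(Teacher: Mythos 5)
Your proof is correct and follows essentially the same strategy as the paper's: use the saturation formula $Y_g=\bigcup_{r(h)=r(g)}\beta_h(X_{d(h)})$ to write points of $Y$ as $\beta_h$ of points of $X$, use the restriction formula $X_g=X\cap\beta_g(X\cap Y_{g^{-1}})$ to pull $\beta$-statements back to $\alpha$-statements, conjugate stabilizer elements by $h$, and invoke the disjointness $X=\bigsqcup_e X_e$ to guarantee composability. The only differences are cosmetic (you anchor the transitivity argument at a fixed base point rather than relating arbitrary pairs), so no further comment is needed.
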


\begin{proof}
As $\beta$ is a globalization for $\alpha$ we have 
$$Y_g=\bigcup\limits_{r(h)=r(g)}\bt_h(X_{d(h)})\quad \mbox{  \,\,\,  and  \,\,\,  } \quad Y=\bigsqcup_{e\in G_0}Y_e.$$
(1) First we suppose $\alpha$ is transitive. Let $x,y\in Y.$ Then, there exist $h,k\in G$ such that $x=\beta_h(\widetilde x), y=\beta_k(\widetilde y)$ for some $\widetilde x\in X_{d(h)}, \widetilde y\in X_{d(k)}$. As $\alpha$ is transitive, there is $g\in G$ such that $\widetilde x\in X_{g^{-1}}$ and $\widetilde y=\alpha_g(\widetilde x)$, and so $\widetilde x\in X_{g^{-1}}\cap X_{d(h)}$, yielding $d(h)=r(g)$. On the other hand, $\widetilde y\in X_g\cap X_{d(k)}$ implies $r(g)=d(k)$. Hence  $y=\beta_k(\alpha_g(\widetilde x))=\beta_k\circ \beta_g\circ\beta_{h^{-1}}(x)=\beta_{kgh^{-1}}(x)$, showing that $\beta$ is transitive. 
Conversely, suppose that $\beta$ is transitive. Let $x,y\in X\subseteq Y$. Then, there exists $g\in G$ such that $x\in Y_{g^{-1}}$ and $y=\beta_g(x)$. As $y\in X \cap Y_{g}$, we have that $x=\beta_{g^{-1}}(y)\in X\cap \beta_{g^{-1}}(X\cap Y_g)=X_{g^{-1}}$. Therefore $y=\alpha_g(x)$, and so $\alpha$ is transitive, as required.

(2) Suppose that $\alpha$ is free. Let $p:Y\to G_0$ be the anchor map of $\beta$ and let $y\in Y$. As $\beta$ is a globalization for $\alpha,$ then it follows by (iii) in Definition \ref{defglobaSM1} that  there exists $h\in G$ and $x\in X_{d(h)}$ such that $y=\beta_h(x)$. If $g\in{\rm stab}_\beta(y)$, then $y\in Y_{g^{-1}}$ and $\beta_g(y)=y$. Thus $y\in Y_{g^{-1}}\cap Y_g\cap Y_h$,  $d(g)=r(g)=r(h)$, and 
$$\beta_{gh}(x)=\beta_g\beta_h(x)=\beta_g(y)=y=\beta_h(x),$$ yielding $\beta_{h^{-1}gh}(x)=x$. The last equality implies that $x\in X\cap Y_{h^{-1}g^{-1}h}$ and 
$$x=\beta_{h^{-1}gh}(x)\in X\cap \beta_{h^{-1}gh}(X\cap Y_{h^{-1}g^{-1}h})=X_{h^{-1}gh}$$ hence $x=\alpha_{h^{-1}gh}(x)$. Since $\alpha$ is free and $x\in X_{d(h)}$ we have that $h^{-1}gh=p(x)=d(h)$, which implies $g=hh^{-1}=r(h)=d(g)=r(g)=p(y).$ We conclude that ${\rm stab}_\beta(y)=\{p(y)\}$, as required. Now suppose that $\beta$ is free and let $x\in X$. Then, it follows from the equality ${\rm stab}_\beta(x)={\rm stab}_\alpha(x)$ that $\alpha$ is free.
\end{proof}

 It is a well-known fact that if $G$ is a group, every transitive partial  action of $G$ on a set $X$ is isomorphic to the left coset action of $G$ on $G/{\rm stab}(x)$ for any $x\in X$. To obtain an extension of this result to the groupoid setting, we introduce a global action on an adequate set.

Let $\af=(X_g, \alpha_g )_{g\in G}$  be a partial action of $G$ on $X$ and  $x\in X$. Let us denote by $H^x=\{h\in G\mid d(h)=p(x)\} =d^{-1}(p(x))$. In $H^x$ we define the following equivalence relation: 
\begin{equation*}\label{equiv}
h_1\sim h_2\,\,\,\text{ if and only if }\,\,\, (h_2^{-1},h_1)\in G^2 \,\,\,\text{ and}\,\,\,  h_2^{-1}h_1\in {\rm stab}_\alpha(x).
\end{equation*} 
We denote the equivalence class of  $h\in H^x$ by $\A$ and  set 
\begin{equation}\label{barhx}\overline{H}^x=\{\A\mid h\in H^x\}.\end{equation} One can see that if $\alpha$ is free, then $\A=\{h\}$, and hence $\overline{H}^x$ equals $H^x.$  Let us define the mapping $t:\overline{H}^x\ni \A\mapsto r(h)\in G_0$ {and $G\times _t\overline{H}^x=\{(g,\overline{h})\in G\times {H}^x\mid t(\overline{h})=d(g)\}$,} it  is easy to see that $t$ is well-defined  and
\begin{equation}\label{delta}\delta: G\times _t\overline{H}^x\ni (g,\A) \mapsto \overline{gh}\in \overline{H}^x,\end{equation} is a well-defined global action of $G$ on $\overline{H}^x$.  Moreover, for each $g\in G$ we have that $$\overline{H}^x_g=\{\A\in \overline{H}^x\mid (g^{-1}, h)\in G\times _t\G\}= \{\A\in \overline{H}^x\mid r(h)=r(g)\}.$$ 
Note that if $\alpha$ is free, then  $\delta$ is a global action of $G$ on $H^x$.
\begin{thm}\label{th1}

{\rm Let $\af=(X_g, \alpha_g )_{g\in G}$ be a transitive partial action of $G$ on a set $X$ and let  $x\in X$. Suppose that the global action  
$\bt=(Y_g, \bt_g)_{g\in G}$ of $G$ on $Y$ is a globalization for $\alpha$. Then, the mapping $\varphi: \overline{H}^x\ni \A \mapsto \beta_h(x)\in Y$, is an isomorphism between the actions $\delta$  and $\beta.$}
\end{thm}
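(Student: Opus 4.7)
The plan is to verify that $\varphi$ is a well-defined bijective $G$-map between the global actions $\delta$ and $\beta$. I would proceed in the following order.

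First, I would check that $\varphi$ is \emph{well defined}. If $h_1 \sim h_2$ then $(h_2^{-1},h_1)\in G^2$ (so $r(h_1)=r(h_2)$) and $g:= h_2^{-1}h_1\in {\rm stab}_\alpha(x)$. Since $\beta$ extends $\alpha$ (because $\beta$ is a globalization and $x\in X$), I get $\beta_g(x)=\alpha_g(x)=x$; composing on the left by $\beta_{h_2}$ yields $\beta_{h_1}(x)=\beta_{h_2}(x)$, so $\varphi(\overline{h_1})=\varphi(\overline{h_2})$.

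Next, the \emph{structural/equivariance checks}. For $\overline{h}\in \overline{H}^x_e$ one has $r(h)=e$, so $\beta_h$ maps into $Y_{r(h)}=Y_e$, hence $\varphi(\overline{H}^x_e)\subseteq Y_e$; this matches the anchor maps. Equivariance is immediate: whenever $(g,\overline{h})\in G\times_{t}\overline{H}^x$, i.e.\ $d(g)=r(h)$, one has $(g,h)\in G^2$ and
\[
\varphi(\delta_g(\overline{h}))=\varphi(\overline{gh})=\beta_{gh}(x)=\beta_g(\beta_h(x))=\beta_g(\varphi(\overline{h})).
\]

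Then I would prove \emph{injectivity}. Suppose $\beta_{h_1}(x)=\beta_{h_2}(x)$. Since each $\beta_{h_i}(x)\in Y_{r(h_i)}$ and $Y=\bigsqcup_{e\in G_0} Y_e$, it follows that $r(h_1)=r(h_2)$, so $(h_2^{-1},h_1)\in G^2$ and $k:=h_2^{-1}h_1\in G_{p(x)}$. Then $\beta_k(x)=\beta_{h_2^{-1}}\beta_{h_1}(x)=x$, hence $x\in Y_{k^{-1}}\cap X$ and $\beta_k(x)=x\in Y_k\cap X$. Using the globalization identity from Definition~\ref{defglobaSM1}(i), namely $X_k=X\cap\beta_k(X\cap Y_{k^{-1}})$, I deduce $x\in X_k$, so $\alpha_k(x)=\beta_k(x)=x$, i.e.\ $k\in{\rm stab}_\alpha(x)$. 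Therefore $h_1\sim h_2$ and $\overline{h_1}=\overline{h_2}$.

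Finally, \emph{surjectivity} is where I would invoke the transitivity of $\alpha$: by Lemma~\ref{lema_globaltransi} the globalization $\beta$ is also transitive, so given any $y\in Y$, there exists $g\in G$ with $x\in Y_{g^{-1}}$ and $y=\beta_g(x)$. Since $x\in X_{p(x)}\subseteq Y_{p(x)}$ and $Y=\bigsqcup_{e\in G_0}Y_e$, the condition $x\in Y_{g^{-1}}=Y_{d(g)}$ forces $d(g)=p(x)$, so $g\in H^x$ and $y=\varphi(\overline{g})$. Combining the four items, $\varphi$ is a bijective $G$-map, hence an isomorphism in ${\bf Pact}(G)$. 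I expect the main (and only really content-bearing) obstacle to be injectivity, since it requires pulling the $\beta$-stabilizer condition back inside $X$ via the precise description of $X_k$ supplied by the globalization axiom; the other steps reduce to definition chasing.
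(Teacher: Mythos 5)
Your proposal is correct and follows essentially the same route as the paper's own proof: well-definedness and equivariance by direct computation, injectivity by pulling the relation $\beta_{h_2^{-1}h_1}(x)=x$ back into $X$ via the globalization identity $X_k=X\cap\beta_k(X\cap Y_{k^{-1}})$, and surjectivity from the transitivity of $\beta$ supplied by Lemma~\ref{lema_globaltransi}. The only differences are that you spell out two details the paper leaves implicit (the anchor condition $\varphi(\overline{H}^x_e)\subseteq Y_e$ and the verification that $d(g)=p(x)$ in the surjectivity step), which is harmless.
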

\begin{proof}
First, we see that  $\varphi$ is well-defined. Indeed,  the conditions  $(h_2^{-1},h_1)\in G^2$ and $h_2^{-1}h_1\in {\rm stab}_G(x)$ imply that $x=\beta_{h_2^{-1}h_1}(x)$, yielding $\beta_{h_1}(x)=\beta_{h_2}(x)$. To see that $\varphi$ is injective let $h_1,h_2\in H$ such that $\beta_{h_1}(x)=\beta_{h_2}(x)$. Then $r(h_1)=r(h_2)$ and $$x=\beta_{h_1^{-1}}\beta_{h_2}(x)=\beta_{h_1^{-1}h_2}(x)\in X\cap Y_{h_1^{-1}h_2},$$ 
and thus $$x=\beta_{h_2^{-1}h_1}(x)\in X\cap Y_{h_1^{-1}h_2}\cap \beta_{h_2^{-1}h_1}(X\cap Y_{h_1^{-1}h_2})=X_{h_2^{-1}h_1}.$$
Therefore $x=\alpha_{h_2^{-1}h_1}(x)$, which implies that $h_2^{-1}h_1\in {\rm stab}_\alpha(x)$, yielding $\overline{h_1}= \overline {h_2}$, as required. To see that $\varphi$ is onto, note that, by Lemma~\ref{lema_globaltransi}, $\beta$ is transitive. Hence,  given $y\in Y$ there exists $h\in G$ such that $x\in Y_{h^{-1}}$ and $y=\beta_h(x)$, showing that $\varphi$ is onto.

Finally, we {show} that $\varphi$ is a morphism in  ${\bf Pact}(G)$. Let $\A\in \overline{H}^x_{g^{-1}}$. Then $d(g)=r(h)$, and {thus} 
$\varphi(\delta_g(\A))=\varphi(\overline{gh})=\beta_{gh}(x)=\beta_g(\beta_h(x))=\beta_g(\varphi(\A)),$
as required.
\end{proof}

Note that if $G$ is a group then for any element $x$ in $X$ we have that $H^x=G$, $\overline{H}^x=G/{\rm stab_\alpha(x)}$ and $\delta$ is the left coset (global) action of $G$ on $G/{\rm stab}_\alpha(x)$. So, we recover the following result which, in turn, is a generalization of a classical result on group actions.

\begin{cor}{\rm(\cite[Theorem 2.6]{Choi})}\label{ChoiLin}
{\rm Suppose that $\alpha$ is a transitive partial action of a group  $G$ on $X$. Then, any globalization of $\alpha$ is isomorphic to the left coset action of $G$ on $G/{\rm stab}_\alpha(x)$ for any $x\in X$.}
\end{cor}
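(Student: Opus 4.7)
The plan is simply to specialize Theorem \ref{th1} to the case in which $G$ is a group, and check that each object appearing in that theorem reduces to its classical group-theoretic counterpart. Since a group is a groupoid with a single identity, we have $G_0=\{e\}$ and $d(g)=r(g)=e$ for every $g\in G$. Hence the anchor map of $\alpha$ satisfies $p(x)=e$ for all $x\in X$, and
\[
H^x = d^{-1}(p(x)) = d^{-1}(e) = G.
\]
This handles the first identification $H^x=G$ mentioned in the remark preceding the corollary.

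Next I would unwind the equivalence relation on $H^x=G$: the condition $(h_2^{-1},h_1)\in G^2$ is automatic because $G^2 = G\times G$ in the group case, so $h_1\sim h_2$ reduces to $h_2^{-1}h_1\in {\rm stab}_\alpha(x)$. This is precisely the equivalence relation defining left cosets modulo ${\rm stab}_\alpha(x)$, so
\[
\overline{H}^x = G/{\rm stab}_\alpha(x).
\]
The action $\delta$ from \eqref{delta}, $\delta_g(\overline{h}) = \overline{gh}$, is then exactly the left coset action of $G$ on $G/{\rm stab}_\alpha(x)$. Moreover the constraint $r(h)=r(g)$ in the description of $\overline{H}^x_g$ is vacuous, so $\delta$ is a global action defined on all of $G/{\rm stab}_\alpha(x)$, as required.

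With these identifications in hand, the map
\[
\varphi:G/{\rm stab}_\alpha(x)\ni \overline{h}\mapsto \beta_h(x)\in Y
\]
provided by Theorem \ref{th1} is an isomorphism in ${\bf Pact}(G)$ between the left coset action $\delta$ and any globalization $\beta$ of $\alpha$. Since this argument is essentially a translation of already-proved statements into the group language, I do not expect any genuine obstacle; the only care needed is to verify that the hypotheses of Theorem \ref{th1} (in particular transitivity of $\alpha$ and the existence of the globalization $\beta$) are carried over verbatim, which they are by assumption.
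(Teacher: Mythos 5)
Your proposal is correct and follows exactly the route the paper takes: the corollary is obtained by specializing Theorem \ref{th1} to the group case via the identifications $H^x=G$, $\overline{H}^x=G/{\rm stab}_\alpha(x)$, and $\delta$ the left coset action, which is precisely what the paper states in the remark preceding the corollary. Your version just spells out these identifications in slightly more detail.
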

 Let us denote by $\delta^{p(x)}$ the left coset action of the isotropy group $G_{p(x)}$ on the set $G_{p(x)}/{\rm stab}_{\delta^{p(x)}}(x)$. Then, as an immediate consequence, we finish this section with the next.

\begin{cor}\label{prop_transi}{\rm Let  $\alpha$ be a transitive partial action of a group  $G$ on $X$. If $\beta$ is a globa\-lization for $\alpha$, then for each $x\in X$ the actions $\beta^{(p(x))}$ and $\delta^{p(x)}$ are isomorphic.}
\end{cor}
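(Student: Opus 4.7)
The plan is to derive this corollary from Theorem \ref{th1} by restricting the isomorphism $\varphi:\overline{H}^x\to Y$ to the isotropy component at $e:=p(x)$. Setting $\varphi_e:=\varphi|_{\overline{H}^x_e}$, the objective is to show that $\varphi_e$ is an isomorphism between $\delta^{p(x)}$ and $\beta^{(p(x))}$ once $\overline{H}^x_e$ is identified with $G_{p(x)}/{\rm stab}_\alpha(x)$.

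First I would verify that $\varphi_e$ sends $\overline{H}^x_e$ bijectively onto $Y_e$. If $\overline{h}\in \overline{H}^x_e$, then $r(h)=e$ and $d(h)=p(x)=e$ (because $h\in H^x$), so $\varphi(\overline{h})=\beta_h(x)\in Y_{r(h)}=Y_e$. Conversely, given $y\in Y_e$, transitivity of $\beta$ (guaranteed by Lemma \ref{lema_globaltransi}) produces $g\in G$ with $y=\beta_g(x)$; the requirements $x\in Y_{g^{-1}}$ and $y\in Y_e$ force $d(g)=r(g)=e$, hence $g\in G_e$ and $y=\varphi_e(\overline{g})$.

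Next, I would identify $\overline{H}^x_e$ with $G_e/{\rm stab}_\alpha(x)$. Since ${\rm stab}_\alpha(x)$ is a subgroup of $G_e$, for $h\in G_e$ the class $\overline{h}$ of the equivalence relation defining $\overline{H}^x$ coincides with the left coset $h\cdot{\rm stab}_\alpha(x)$, and the restricted action $g\cdot\overline{h}=\overline{gh}$ becomes precisely the left coset action of $G_{p(x)}$ on $G_{p(x)}/{\rm stab}_\alpha(x)$, that is $\delta^{p(x)}$.

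Finally, the intertwining identity $\varphi_e\circ\delta^{p(x)}_g=\beta^{(p(x))}_g\circ\varphi_e$ for $g\in G_e$ is inherited directly from $\varphi\circ\delta_g=\beta_g\circ\varphi$ already established in the proof of Theorem \ref{th1}, so $\varphi_e$ is the desired isomorphism. There is no real obstacle here: the whole argument amounts to checking that the $G_0$-grading of the spaces $\overline{H}^x$ and $Y$ is respected by $\varphi$ and therefore the isomorphism descends to each isotropy fiber.
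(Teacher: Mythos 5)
Your proof is correct and is exactly the argument the paper has in mind: the corollary is stated there without proof as an ``immediate consequence'' of Theorem \ref{th1}, namely that the isomorphism $\varphi$ respects the $G_0$-grading and hence restricts to an isomorphism $\overline{H}^x_{p(x)}\cong Y_{p(x)}$ intertwining the coset action of $G_{p(x)}$ with $\beta^{(p(x))}$. (Note only that you have, reasonably, read the hypothesis ``group'' as ``groupoid'' --- as the surrounding text about isotropy groups indicates --- since for an honest group the statement collapses to Corollary \ref{ChoiLin}.)
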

\section{Globalization of partial Lie groupoid actions on manifolds}\label{s3}

In this section, we study partial actions in a differential context. More specifically, we treat  the globalization problem for a partial group action of a Lie groupoid on a smooth manifold and present some results related to this. 

In what follows the term smooth manifold means a Hausdorff, second countable, locally Euclidean topological space endowed with a differential structure. For convenience, all  manifolds considered in this work  are $C^\infty$-differentiable.  Following the terminology in~\cite{Lee} we say that a subset $S$ of a smooth manifold M is an {\em embedded submanifold} of $M$ if it is a smooth manifold with the subspace topology,  and we say that { $S$ is an {\em (immersed) submanifold} of $M$ if it  is   a smooth manifold endowed with a certain  topology  such that the inclusion $S\hookrightarrow M$ is an immersion.}

We recall that a groupoid $G$ is said to be a \emph {Lie groupoid} if it is endowed with a structure of smooth manifold compatible with its algebraic structure, that is:
\begin{enumerate}
\item $G_0$ is an embedded closed submanifold of $G;$
\item the inversion $ {\rm inv}: G\to G$ and the multiplication map ${m}: G^{2}\to G$ are smooth; and
\item the maps $d,r: G\to G_0$ are surjective submersions.
\end{enumerate}
It is a well-known fact that the inversion map ${\rm inv}: G\to G$ is a diffeomorphism (see~\cite[Proposition 1.1.5]{Mackenzie}).
Following~\cite{NY}, we say that a Lie groupoid $G$ is {\it star open} if $d\m(e)$ is open in $G,$ for any $e\in G_0,$ for instance every Lie group is a star open Lie groupoid.   The next result characterizes star open Lie groupoids.

\begin{prop}\label{soc} {\rm Let $G$ be a Lie groupoid. The following assertions are equivalent.
\begin{enumerate}
\item[(1)] $G$ is star open.
\item[(2)]  $r^{-1}(e)$ is open in $G,$ for any $e\in G_0.$
\item[(3)]  $G_0$ is discrete.
\end{enumerate}}
\end{prop}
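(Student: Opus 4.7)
The plan is to prove the chain of implications (3) $\Rightarrow$ (1) $\Leftrightarrow$ (2) and (1) $\Rightarrow$ (3), leveraging three structural facts already available in the excerpt: that $d,r\colon G\to G_0$ are (in particular) continuous, that the inversion map $\mathrm{inv}\colon G\to G$ is a diffeomorphism, and that $G_0$ is an embedded submanifold of $G$, so that it carries the subspace topology.

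First I would dispose of (1) $\Leftrightarrow$ (2). Since $d\circ\mathrm{inv}=r$, we have the set equality $r^{-1}(e)=\mathrm{inv}(d^{-1}(e))$ for every $e\in G_0$. Because $\mathrm{inv}$ is a diffeomorphism, it is in particular a homeomorphism, so $d^{-1}(e)$ is open in $G$ precisely when $r^{-1}(e)$ is. Thus the two conditions are equivalent without any further work.

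Next I would prove (3) $\Rightarrow$ (1). If $G_0$ is discrete, then $\{e\}$ is open in $G_0$ for every $e\in G_0$. The submersion $d\colon G\to G_0$ is in particular continuous, so $d^{-1}(\{e\})$ is open in $G$. This immediately gives that $G$ is star open.

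The remaining implication (1) $\Rightarrow$ (3) is the only one that requires a small observation. The key point is that for any identity $f\in G_0$ we have $d(f)=f$, so $d^{-1}(e)\cap G_0=\{e\}$ for each $e\in G_0$. Assuming $d^{-1}(e)$ is open in $G$, intersecting with $G_0$ (which has the subspace topology since it is embedded) shows that $\{e\}$ is open in $G_0$. As this works for every $e\in G_0$, the submanifold $G_0$ is discrete. I do not foresee a genuine obstacle here; the only thing to be careful about is ensuring that $G_0$ really has the subspace topology, which is guaranteed by the \emph{embedded} submanifold assumption in the definition of a Lie groupoid.
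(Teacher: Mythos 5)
Your proof is correct, and for two of the three steps --- the equivalence of (1) and (2) via the inversion diffeomorphism, and (3) $\Rightarrow$ (1) via continuity of $d$ --- it coincides with the paper's argument. Where you genuinely diverge is in (1) $\Rightarrow$ (3). The paper invokes the regular level set theorem (\cite[Corollary 5.13]{Lee}) to see that each fibre $d^{-1}(e)$ of the submersion $d$ is an embedded submanifold of codimension $\dim G_0$; openness of the fibre then forces that codimension to be $0$, so $\dim G_0=0$. You instead give a purely point-set argument: since $d$ restricts to the identity on $G_0$, one has $d^{-1}(e)\cap G_0=\{e\}$, and openness of $d^{-1}(e)$ in $G$ together with the subspace topology on the embedded submanifold $G_0$ makes each singleton open, i.e.\ $G_0$ is discrete. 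The two conclusions agree because a manifold is discrete exactly when it is zero-dimensional. Your route is more elementary (it uses only continuity of $d$, the identity $d|_{G_0}=\mathrm{id}_{G_0}$, and the embedding hypothesis, with no appeal to the constant-rank machinery), and you are right to flag that the embeddedness of $G_0$ is what licenses passing from ``open in the subspace topology'' to ``open in $G_0$''; the paper's route, by contrast, trades that topological care for the quantitative statement about codimension, which is the form of the conclusion it actually uses ($\dim G_0=0$). Both are complete proofs.
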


\begin{proof} Since the inversion map ${\rm inv}: G\to G$ is a diffeomorphism,  it follows that $G$ is star open, if and only if, $r^{-1}(e)={\rm inv}^{-1}(d^{-1}(e))$ is open in $G,$  for any $e\in G_0.$ This shows (1) $\Leftrightarrow$ (2).  Now for (1) $\Rightarrow$ (3), it follows from~\cite[Corollary 5.13]{Lee} that each $d^{-1}(e), e \in G_0,$ is an embedded submanifold of codimension $\dim G_0$, but as $d^{-1}(e)$ is open in $G$ its dimension coincides with $\dim G$, yielding $\dim G_0=0$, as required. Finally, for  (3) $\Rightarrow$ (1)  assume that $G_0$ is discrete, then, by continuity, $d^{-1}(e)$ is open in $G$ for any $e\in G_0$.
\end{proof}


\begin{defn}\label{def_partialaction}{\rm
 Let $M$ be a smooth manifold. A {\em smooth partial action } of a Lie groupoid $G$ on $M$ is a set-theoretic partial action 
$\alpha =(M_g, \alpha_g)_{g\in G}$ of the abstract groupoid $G$ on the underlying set $M$, such that:
\begin{enumerate}
   \item[(i)] For each  $g\in G$, $M_g$ is a submanifold of $M$ and $\alpha_g:M_{g^{-1}}\to M_g$ is a diffeomorphism.
    \item[(ii)] The anchor map $p:M\to G_0$ is smooth.
    \item[(iii)] The set $\Gamma_\alpha=\{(g,x)\in G\times M\mid x\in M_{g^{-1}}\}$ is a submanifold of $G\times M$, and the map (also called $\alpha$) $\alpha:\Gamma_\alpha\to M$, defined by $\alpha(g,x)=\alpha_g(x)$, is smooth. 
\end{enumerate}
}
\end{defn}

\noindent Given a set $X$ and  a surjection $s: X\to G_0$, we  set
$G\times _s X=\{(g,x)\in G\times X\mid s(x)=d(g)\},$ which coincides with the fiber product of $s$ and $d.$ Note that if $\alpha$ is global action then $\Gamma_\alpha=G\times_p M$ is automatically a submanifold of $G\times M$ {(see \cite[Example 13.24]{Cra}).}

\begin{defn}
{\rm A smooth partial action $\alpha =(M_g, \alpha_g)_{g\in G}$ of $G$ on $M$ is called {\em graph open}\footnote{This term  comes from the topological context language \cite[Definition 4.1 (iii)]{MP1}.} if $\Gamma_{\alpha}$ is an open subset of $G\times M$.}
\end{defn}

\begin{exe} {\rm Graph open partial actions of Lie groups and restrictions of  graph open partial actions of Lie groupoids, both smooth, on open  submanifolds are  graph open partial actions of Lie groupoids. }\end{exe}

If $\alpha$ is a graph open partial action of $G$ on $M$,  
then  follows from  condition~{(iii)} of Definition~\ref{def_partialaction} that for each $g\in G$, the set $M_{g}$ is an open subset of $M,$  
 $G^x=\{g\in G\mid  (g,x)\in \Gamma_\alpha \}$ is an  open submanifold of $G,$ and  $\alpha^{x}:G^x\ni g\mapsto \alpha(g,x)$ is smooth, for any $x\in M.$ Finally,  a calculation shows that 
 \begin{equation}\label{edife}
 d\alpha(g_0,x_0)(v,w)=d\alpha^{x_0}(g_0)v+d\alpha_{g_0}(x_0)w, 
 \end{equation}
 for  $(g_0, x_0)\in\Gamma_\alpha$ and $(v,w)\in T_{g_0}G\times T_{x_0}M$, which implies that $\alpha$ is a submersion,  as $\alpha_{g_0}$ is a diffeomorphism.

\begin{defn}\label{defglobaSM} {\rm A global action
$\bt=(N_g, \bt_g)_{g\in G}$ of $G$ on a smooth manifold $N$ is
a {\it globalization} of a smooth partial action 
$\af=(M_g, \alpha_g)_{g\in G}$  if, 
 there exists a smooth map $\iota:M\to N$ such
that $\iota(M)$ is an open subset of $N,$ $\iota:M\to N$ is a diffeomorphism onto its image; and (i)-(iii) in Definition \ref{defglobaSM1} are satisfied. If $\alpha$ has a globalization we say that $\alpha$ is {\em globalizable}.}
\end{defn}

In the topological setting, the construction of a globalization for a partial groupoid action is presented in \cite[Theorem 5]{NY}, for the reader's convenience we recall their construction.

 Let $\alpha=(M_g,\alpha_g)_{g\in G}$ be a graph open smooth partial action of $G$ on $M$. Let us consider the set  $$\M=G\times_p M=\{(g,x)\in G\times M\mid x\in M_{d(g)}\}=\{(g,x)\in G\times M\mid p(x)=d(g)\}$$ with the topology induced by the product topology, and define the following equivalence relation on $\M:$
\begin{equation}\label{equivr}(g,x)\sim (h,y)\,\,\,\,\text{ if and only if }\,\,\,\,(g^{-1},h)\in G^2, x\in M_{g^{-1}h}\,\,\text{ and }\,\,y=\alpha_{h^{-1}g}(x).
\end{equation} As usual, we denote by $[g,x]$ the equivalence class of $(g,x)\in \M$ and by $M_G$  the quotient  space $\M/\sim$ endowed with the quotient topology. 
\begin{rem}{\rm It is not difficult to see that, the equivalence relation in \eqref{equivr}  is the same as the one given by Nystedt in~\cite{NY}. Indeed, the condition $M=\bigsqcup\limits_{e\in G_0} M_e$ in Definition~\ref{ny} makes the second part of
\cite[Definition 12]{NY}  irrelevant.}
\end{rem}
Set
\begin{equation}\label{eq_anchor}
t: M_G\ni [h,x]\to r(h)\in G_0.
\end{equation}
Then, $t$ is well-defined and 
\begin{equation}\label{eq_globalization}
\beta:G\times_t M_G \ni (g,[h,x])\mapsto[gh,x]\in M_G, 
\end{equation}
is a well-defined set-theoretic global action of $G$ on $M_G$ (see~\cite[Theorem 4]{NY}), in which $$(M_G)_g=\{[h,x]\in M_G\mid r(g)=r(h)\}$$ and $\beta_g: (M_G)_{g^{-1}}\ni [h,x]\mapsto [gh,m]\in  (M_G)_g$ is a bijection. \\%

\begin{lem}\label{properties}{\rm 
Suppose that $G$ is a star open  Lie groupoid, $M$ a smooth manifold, and  $\alpha=(M_g, \alpha_g)_{g\in G}$ a  graph open smooth partial action of $G$ on $M.$ Then the following assertions  hold: 
\begin{enumerate}
\item[(1)] The natural projection $\pi:\M\to M_G$ is an open map. 
\item[(2)] The  action $\beta: G\times_t M_G\to M_G$ is continuous. Thus, each $\beta_g:(M_G)_{g^{-1}}\to (M_G)_g$ is a  homeomorphism.  
\item[(3)] The map $\iota: M\ni x\mapsto [p(x), x]\in M_G$ is injective, continuous and open. Therefore, it is a  homeomorphism onto $\iota(M)$. 
\item[(4)] For an open subset $U$ of $M$ we have that $\iota(U)\cap (M_G)_{d(g)}=\iota(U\cap M_{d(g)}).$ 

\end{enumerate}  

}
\end{lem}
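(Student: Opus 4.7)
I will prove the four parts in order; part 1 is the delicate step and the remaining parts will follow relatively directly from it together with graph-openness of $\alpha$ and star-openness of $G$.

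For part 1, the strategy is: given an open $W\subseteq\M$, show $\pi^{-1}(\pi(W))$ is open, so that $\pi(W)$ is open by the quotient topology. Fix $(g_0,x_0)\in W$ and a point $(h,y)\sim(g_0,x_0)$, meaning $r(h)=r(g_0)$, $x_0\in M_{g_0^{-1}h}$, and $y=\alpha_{h^{-1}g_0}(x_0)$. Setting $k=g_0^{-1}h$, my candidate is the translation map
\[F_k(g,x)=(gk,\alpha_{k^{-1}}(x))\]
defined on $\{(g,x)\in\M : r(g)=r(g_0),\ x\in M_k\}$. Star-openness of $G$ makes $r^{-1}(r(g_0))$ open by Proposition \ref{soc}, and graph-openness of $\alpha$ makes $M_k$ open, so this domain is an open neighborhood of $(g_0,x_0)$. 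A short check shows $F_k$ lands in $\M$, is continuous (using continuity of groupoid multiplication and of $\alpha$), sends $(g_0,x_0)\mapsto(h,y)$, has continuous inverse $F_{k^{-1}}$, and satisfies $(g,x)\sim F_k(g,x)$; then $F_k(W\cap\dom F_k)$ is an open neighborhood of $(h,y)$ contained in $\pi^{-1}(\pi(W))$. The main obstacle is spotting this translation map and noticing that star-openness is precisely what is needed to make its domain open.

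For part 2, since $\pi$ is open, so is $\mathrm{id}_G\times\pi:G\times\M\to G\times M_G$; its restriction to the preimage of $G\times_t M_G$ is then a quotient map. Continuity of $\beta$ reduces to continuity of its lift $(g,(h,x))\mapsto[gh,x]$, which is the composition of groupoid multiplication (paired with $\mathrm{id}_M$) with $\pi$, both continuous. Each $\beta_g$ is the restriction of $\beta$ to a slice, hence continuous; since $\beta_{g^{-1}}$ is a continuous two-sided inverse, $\beta_g$ is a homeomorphism.

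For part 3, injectivity follows by unravelling $\sim$: $[p(x),x]=[p(y),y]$ forces $r(p(x))=r(p(y))$, i.e.\ $p(x)=p(y)$, and then $y=\alpha_{p(x)}(x)=x$. Continuity comes from $\iota=\pi\circ j$ with $j(x)=(p(x),x)$ continuous. For openness, rather than showing $j(U)$ is open in $\M$ (which can fail, e.g.\ when $G_0$ is not open in $G$ for $G$ a positive-dimensional Lie group), I directly compute
\[\pi^{-1}(\iota(U))=\{(g,x)\in\M : x\in M_{g^{-1}},\ \alpha_g(x)\in U\}=\Gamma_\alpha\cap\alpha^{-1}(U),\]
which is open by graph-openness and continuity of $\alpha$; since $\pi$ is a quotient map, $\iota(U)$ is open. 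Finally, part 4 is direct: $[p(y),y]\in(M_G)_{d(g)}$ is equivalent to $t([p(y),y])=p(y)=d(g)$, i.e.\ $y\in M_{d(g)}$, and this yields both inclusions.
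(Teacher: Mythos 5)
Your proof is correct and follows essentially the same route as the paper's, so only a brief comparison is in order. For item 1 the paper invokes the saturation formula $\pi^{-1}(\pi((V\times U)\cap\M))=\bigcup_{k\in G}R_{k^{-1}}\bigl(V\cap d^{-1}(d(k))\bigr)\times\alpha_k(U\cap M_{k^{-1}})$ from \cite{MP1}; your pointwise translation maps $F_k$ are precisely the building blocks of that union, and you deploy star-openness (openness of $r^{-1}(e)$) and graph-openness (openness of each $M_k$) at exactly the same spots. For item 2 the paper constructs an auxiliary quotient $Y=X/\equiv$ and verifies by hand that it is homeomorphic to $G\times_t M_G$; this is a longhand version of your observation that the restriction of the open continuous surjection ${\rm id}_G\times\pi$ to the preimage of $G\times_t M_G$ is itself a quotient map, after which continuity of $\beta$ follows from continuity of the lift. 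The one place you genuinely add content is item 3: the paper outsources openness of $\iota$ to \cite{NY}, whereas your identification $\pi^{-1}(\iota(U))=\{(g,x)\in\Gamma_\alpha\mid\alpha_g(x)\in U\}=\alpha^{-1}(U)$, open by graph-openness and continuity of $\alpha$, gives a clean self-contained argument; your parenthetical warning that $j(U)$ itself need not be open in $\M$ correctly identifies why one must pass to the saturation rather than argue through $j$. Items 4 and the remaining verifications coincide with the paper's.
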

\begin{proof}
(1) 
{This is a consequence of  \cite[Proposition 4.4 (ii)]{MP1}.}

(2) We consider the set  $X=\{(g,(h,x))\in G\times \M\mid (g,h)\in G^2\}$, and define $\theta: X\ni (g, (h,x))\mapsto (gh,x)\in  \M,$ we shall check that $\theta$ is continuous. Let $U\subseteq G$ and $V\subseteq M$ be open sets, since $\theta^{-1}(U\times V\cap \M)=(m^{-1}(U)\times V)\cap X$  and  the multiplication map $m:G^2\to G$ is continuous,  then  $\theta^{-1}(U\times V\cap \M)$ is open, and therefore $\theta$ is continuous.  Now, in $X$ we define the following equivalence relation:
\begin{center} $(g,(h,x))\equiv (g',(h',x'))$ if and only if $g=g'$ and $[h,x]=[h',x']$. \end{center}
Denote by {$\overline{(g,(h,x))}$ the equivalence class of $(g,(h,x))\in X$}. Put $Y=X/\equiv $ and let  $\overline{\pi}:X\to Y$ be the natural projection,  and endow $Y$ with the quotient topology. Let $$\phi:Y\ni {\overline{(g,(h,x))}}\mapsto (g,[h,x])\in G\times_t{M_G}.$$ Since $\phi\circ \overline{\pi}=id\times\pi$,  we have that  $\phi$ is continuous. Now we check that $\phi$ is open, for this  let $O$ be an open subset of $Y,$ then there exist open subset $U\subseteq G$ and $V\subseteq \M$ such that $\overline{\pi}^{-1}(O)=(U\times V)\cap X$, and it is not difficult to see that $\phi(O)=U\times \pi(V)$. Thus $\phi$ is an open map, {because $\pi$ is open}, and hence a homeomorphism, as it is bijective. Finally, from the equality  $\beta\circ \phi\circ \overline{\pi}=\pi\circ \theta$, we get that  $\beta$ is  continuous.  

(3) The fact that $\iota$ is injective, follows from the disjoint union $M=\bigsqcup\limits_{e\in G_0}p^{-1}(e).$ Furthermore, $\iota$ is continuous since $\pi$ and $p$ are continuous. Finally,~\cite[Proposition 38]{NY} shows that $\iota$ is an open map. 

 (4) For $g\in G$ we have
\begin{align*}
    \iota(U)\cap (M_G)_{d(g)}&=\{[p(y),y]\mid y\in U\mbox{ and }p(y)=d(g)\}\\
    &=\{[p(y),y]\mid y\in M_{d(g)}\cap U\}=\iota(U\cap M_{d(g)}),
\end{align*}
as desired.
\end{proof}


Our next goal is to obtain a {graph open} smooth globalization for  {a graph open smooth partial action} $\alpha$.  As a first step, we notice that $M_G$ is second countable, as  $\pi:\M\to M_G$ is a surjective open map, and $G$ and $M$ are second countable. As a second step, we construct a differential structure for $M_G$.   At this point, we assume that  $G$ is star open.
For $[g,x]\in M_G$ we fix a representative $(g,x)\in \M$ of it; and for this representative we choose a chart $\varphi: U\to \R^n$ of $M$ such that $x\in U$. Then, by 4) in  Lemma~\ref{properties},  the set  $\U_{g,x}=\beta_g \iota(U\cap M_{d(g)})$ is  open in $(M_G)_g$, and therefore  in $M_G$,  also $[g,x]\in \U_{g,x}$.
Let $\varphi_{g,x}:\U_{g,x}\to \R^n$ be the composition 
$ \U_{g,x}\stackrel{\beta_{g^{-1}}}{\longrightarrow}\iota(U\cap M_{d(g)})\stackrel{\iota^{-1}}{\longrightarrow}U\cap M_{d(g)}\stackrel{\varphi}{\longrightarrow}\R^n,$
that is, $\varphi_{g,x}([g,y])=\varphi(y)$ for all $[g,y]\in \U_{g,x}$.
Since $\iota$ is a homeomorphism onto its image and $\beta_g$ is a homeomorphism, it follows that $\varphi_{g,x}$ is a well-defined homeomorphism from $\U_{g,x}$ onto an open subset of $\R^n$. 
\begin{prop}\label{pro1}{\rm 
Let $(g,x)$ and $(h,y)$ be two arbitrary points of $ G\times_p M$, and let $(\varphi,U)$ and $(\psi,V)$ be two compatible charts of $M$ such that $x\in U$ and $y\in V$. Then, the charts $\varphi_{g,x}$ and $\psi_{h,y}$, defined above, are compatible. Consequently, the maximal atlas containing all the $\varphi_{g,x}$, with $[g,x]\in M_G$, does not depend of the choice of the representative of $[g,x]\in M_G.$}
\end{prop}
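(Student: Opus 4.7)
The plan is to reduce the compatibility check to a statement about $\alpha$ itself. Concretely, I will show that on the overlap $\U_{g,x}\cap \U_{h,y}$, the transition map $\psi_{h,y}\circ \varphi_{g,x}^{-1}$ is literally the reading of the diffeomorphism $\alpha_{h^{-1}g}$ in the charts $\varphi$ and $\psi$ of $M$; smoothness then drops out from the fact that $\alpha$ is a smooth partial action.

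First I would dispose of the trivial case. Since $\U_{g,x}\subseteq (M_G)_{r(g)}$ and $\U_{h,y}\subseteq (M_G)_{r(h)}$, and the fibers of the anchor $t\colon M_G\to G_0$ are pairwise disjoint, if $r(g)\neq r(h)$ then $\U_{g,x}\cap \U_{h,y}=\emptyset$ and compatibility holds vacuously. So I may assume $r(g)=r(h)$, which makes $g^{-1}h$ a well-defined element of $G$. Unwinding the equivalence relation \eqref{equivr}, the equality $[g,y']=[h,z']$ with $y'\in U\cap M_{d(g)}$ and $z'\in V\cap M_{d(h)}$ holds exactly when $y'\in M_{g^{-1}h}$ and $z'=\alpha_{h^{-1}g}(y')$. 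Setting
\[
W':=U\cap M_{g^{-1}h}\cap \alpha_{h^{-1}g}^{-1}(V),
\]
which is open in $M$ because $\alpha$ is graph open (so $M_{g^{-1}h}$ is open) and $\alpha_{h^{-1}g}$ is continuous, I obtain $\U_{g,x}\cap \U_{h,y}=\beta_g(\iota(W'))$.

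Next, I would compute the transition function pointwise. For $y'\in W'$, the definitions give
\[
\varphi_{g,x}([g,y'])=\varphi(y') \qquad\text{and}\qquad \psi_{h,y}([g,y'])=\psi_{h,y}([h,\alpha_{h^{-1}g}(y')])=\psi(\alpha_{h^{-1}g}(y')),
\]
so on the open set $\varphi(W')\subseteq \R^n$ the change of coordinates reads
\[
\psi_{h,y}\circ \varphi_{g,x}^{-1}=\psi\circ \alpha_{h^{-1}g}\circ \varphi^{-1}.
\]
Since $\varphi,\psi$ are smooth charts of $M$ and $\alpha_{h^{-1}g}\colon M_{g^{-1}h}\to M_{h^{-1}g}$ is a diffeomorphism between open submanifolds of $M$ (by Definition~\ref{def_partialaction} together with graph openness), this composition is smooth; the inverse transition is of the same shape with $g$ and $h$ interchanged. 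Hence $\varphi_{g,x}$ and $\psi_{h,y}$ are smoothly compatible.

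Finally, the consequence about the maximal atlas is immediate from the first part: given any two representatives $(g,x)$ and $(g',x')$ of the same point of $M_G$ and any charts of $M$ around $x$ and $x'$, the induced charts $\varphi_{g,x}$ and $\varphi'_{g',x'}$ are compatible by what was just proved, so they sit in one and the same maximal atlas. I expect the only delicate point in the argument to be the clean identification of the overlap $\U_{g,x}\cap \U_{h,y}$ via \eqref{equivr}, after which recognizing the transition as $\psi\circ \alpha_{h^{-1}g}\circ \varphi^{-1}$ is mechanical and smoothness follows from the definition of a smooth partial action.
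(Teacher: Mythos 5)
Your proof is correct and follows essentially the same route as the paper's: both reduce compatibility to the identity $\psi_{h,y}\circ\varphi_{g,x}^{-1}=\psi\circ\alpha_{h^{-1}g}\circ\varphi^{-1}$ on the overlap and invoke smoothness of $\alpha_{h^{-1}g}$. Your explicit identification of the overlap as $\beta_g(\iota(W'))$ with $W'=U\cap M_{g^{-1}h}\cap\alpha_{h^{-1}g}^{-1}(V)$, and the dispatch of the case $r(g)\neq r(h)$, are welcome extra details that the paper leaves implicit.
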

\begin{proof}
Suppose that $\U_{g,x}\cap \V_{h,y}\neq \emptyset$. Let us see that the change of coordinates $$\psi_{h,y}\circ\varphi_{g,x}^{-1}:\varphi_{g,x}(\U_{g,x}\cap \V_{h,y})\to \psi_{h,y}(\U_{g,x}\cap \V_{h,y})$$ is differentiable. Let $[g,u]=[h,v]\in \U_{g,x}\cap \V_{h,y}$. Then $(g^{-1},h)\in G^2$, $u\in M_{g^{-1}h}$ and $\alpha_{g^{-1}h}(u)=v$, which implies
\begin{align*}
\psi_{h,y}\circ\varphi_{g,x}^{-1}(\varphi(u))&=\psi_{h,y}\circ\varphi_{g,x}^{-1}(\varphi_{g,x}([g,u]) =\psi_{h,y}([g,u]) \\&=\psi_{h,y}([h,v])=\psi(v)=\psi(\alpha_{h^{-1}g}(u)).
\end{align*}
Thus $\psi_{h,y}\circ\varphi_{g,x}^{-1}=\psi\circ\alpha_{h^{-1}g}\circ \varphi^{-1}$ on $\varphi_{g,x}(\U_{g,x}\cap \V_{h,y}).$ It follows from the smoothness  of $\alpha_{h^{-1}g}$ that the charts $\varphi_{g,x}$ and $\psi_{h,y}$ are compatible. 
\end{proof}


In all what follows we denote by $\mathcal{A}$ the maximal atlas for the manifold $M_G$ containing all the maps $\varphi_{g,x}$ with $[g,x]\in M_G$. 
Let $x\in M$ and $(\varphi, U)$ be a chart of $M$ such that $x\in U$, it is easy to see that the representation of $\iota$ 
 in the local coordinates $\varphi$ and $\varphi_{p(x),x}$ coincides with the identity map on $U\cap M_{d(x)}$. Also, for each  $g\in G$, the representation of  $\beta_g$ 
 in the local coordinates $\varphi_{h,y}$ and $\varphi_{gh,y}$ coincides with the identity map on an open subset of $\R^n$. Thus, $\iota $ and $\beta_g$ are smooth maps. In addition, since $\iota$ and $\beta_g$ have constant rank and are bijective onto their images, we conclude that they are diffeomorphisms onto their corresponding images.

\begin{prop} \label{differentiable_t}{\rm  The following assertions hold.
\begin{enumerate}
\item[(1)] The atlas $\mathcal{A}$ is the unique differential structure on $M_G$ such  $\pi: G\times_p M\to M_G$ is a surjective submersion. 
\item[(2)] The maps $t: M_G\to G_0$ and $\beta: G\times_t M_G\to M_G$, given by~\eqref{eq_anchor} and ~\eqref{eq_globalization} respectively, are smooth.
\end{enumerate}}
\end{prop}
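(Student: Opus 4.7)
The plan is to first establish that the projection $\pi:G\times_p M\to M_G$ is a surjective submersion when $M_G$ carries $\mathcal{A}$, and then deduce both the uniqueness in item~1 and the smoothness claims of item~2 as formal consequences of the universal property of surjective submersions: for a surjective submersion $\pi:X\to Y$ and any map $f:Y\to Z$ into a smooth manifold $Z$, smoothness of $f$ is equivalent to smoothness of $f\circ\pi$.

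To show $\pi$ is a submersion, the strategy is to exhibit a smooth local section through each $(h,y)\in G\times_p M$. Given a chart $(\varphi,U)$ of $M$ with $y\in U$, observe that $G\times_p M$ is an open submanifold of $G\times M$ because $G_0$ is discrete (Proposition~\ref{soc}), and recall from the discussion preceding the proposition that both $\iota$ and each $\beta_g$ are smooth diffeomorphisms onto their images. Then the composition
\[
s_{h,y}:\U_{h,y}\xrightarrow{\beta_{h^{-1}}}\iota(U\cap M_{d(h)})\xrightarrow{\iota^{-1}}U\cap M_{d(h)}\xrightarrow{z\mapsto(h,z)} G\times_p M
\]
is smooth and satisfies $\pi\circ s_{h,y}=\mathrm{id}_{\U_{h,y}}$; since $(h,y)$ lies in the image of $s_{h,y}$, the map $\pi$ is a submersion at $(h,y)$. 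As $(h,y)$ was arbitrary, $\pi$ is a surjective submersion.

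Uniqueness in item~1 then follows formally: if $\mathcal{A}'$ is another smooth structure on $M_G$ for which $\pi$ is a submersion, applying the universal property to $\pi$ in both directions makes $\mathrm{id}:(M_G,\mathcal{A})\to(M_G,\mathcal{A}')$ a diffeomorphism, so $\mathcal{A}=\mathcal{A}'$. For item~2, smoothness of $t$ reduces via the universal property to smoothness of $t\circ\pi:(g,x)\mapsto r(g)$, which is clear. For $\beta$, first note that $G\times_t M_G$ is an embedded submanifold of $G\times M_G$ because $t$ is smooth and $d$ is a submersion. Next, $\mathrm{id}_G\times\pi:G\times(G\times_p M)\to G\times M_G$ is a surjective submersion, and its restriction to the preimage $P=\{(g,(h,x))\mid d(g)=r(h)\}$ is a surjective submersion onto $G\times_t M_G$. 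Since $\beta\circ(\mathrm{id}_G\times\pi)|_P$ coincides with the smooth map $(g,(h,x))\mapsto\pi(gh,x)$, the universal property yields smoothness of $\beta$.

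The main obstacle I expect is the careful verification that $s_{h,y}$ is a well-defined smooth section: one must check that $\U_{h,y}$ lies in the domain $(M_G)_h$ of $\beta_{h^{-1}}$ and that $\beta_{h^{-1}}(\U_{h,y})=\iota(U\cap M_{d(h)})$. Both follow from the construction of the chart $\varphi_{h,y}$, but they require tracing the equivalence relation on $\M$. Once this is settled, the remaining arguments are standard applications of the universal property of surjective submersions and of the submersion-preimage theorem for fiber products.
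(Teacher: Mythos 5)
There is a genuine gap: your argument never establishes that $\pi$ is \emph{smooth}, and without that the rest collapses. The existence of smooth local sections $s_{h,y}$ with $\pi\circ s_{h,y}=\mathrm{id}_{\U_{h,y}}$ through every point shows that $d\pi$ is surjective at every point \emph{provided $\pi$ is already known to be smooth}; it does not by itself imply smoothness. (For a toy counterexample, take $\pi:\R^2\to\R$, $\pi(x,y)=x+f(y)$ with $f$ continuous, $f(0)=0$, but not smooth: every point of $\R^2$ lies on a smooth local section $t\mapsto (t-f(y_0),y_0)$, yet $\pi$ is not smooth.) Moreover, every subsequent step of your proof --- the uniqueness of $\mathcal{A}$, and the smoothness of $t$ and $\beta$ via the universal property of surjective submersions --- presupposes that $\pi$ is a smooth surjective submersion, so the omission propagates everywhere.

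Proving smoothness of $\pi$ is in fact the substantive part of the paper's proof, and it is precisely where condition~3 of Definition~\ref{def_partialaction} (joint smoothness of $\alpha:\Gamma_\alpha\to M$) enters: near $(g,x)$ one computes $\varphi_{g,x}(\pi(h,y))=\varphi(\alpha_{g^{-1}h}(y))$, which the paper packages as the factorization $\pi|_W=\beta_g\circ\iota\circ\alpha|_{\widetilde W}\circ(L_{g^{-1}}\times\mathrm{id}_U)|_W$ over a suitable open set $W\subseteq G\times_p M$. Note that the chart construction and Proposition~\ref{pro1} only use smoothness of the individual maps $\alpha_{g}$, so joint smoothness of $\alpha$ cannot be bypassed here. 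Your section construction is correct and, once smoothness of $\pi$ is supplied, gives a clean alternative to the paper's way of seeing that the smooth map $\pi$ is a submersion (the paper instead uses that $\alpha|_{\widetilde W}$ is a submersion and the other factors are diffeomorphisms); the remainder of your argument (uniqueness, and items on $t$ and $\beta$) then matches the paper's. But as written, the proof is incomplete at its central point.
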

\begin{proof}
(1) First we show that $\pi$ is smooth with the atlas $\mathcal{A}$. For   $(g,x)\in G\times_p M$ it is enough to show that there exists an open subset $W\subseteq G\times_p M$, such that $(g,x)\in W$ and $\pi|_W$ is smooth. Let $V\subseteq G$ and $U\subseteq M$ be open sets  such that $V\times U\subseteq \Gamma_\alpha$ and $(g,x)\in V\times U$. Then $\U_{g,x}=\beta_g \iota(U\cap M_{g\m})=\{\pi(g,z)\mid z\in U\cap M_{g\m}\}$ is an open subset of $M_G$. We claim that 
\begin{equation}\label{opena}
\pi^{-1}(\U_{g,x})\cap (V\times U)=\{(h,y)\in V\times U\mid r(h)=r(g)\text{ \,\  and  \, }\,\, y\in \alpha_{h\m g}(M_{g^{-1}h}\cap U)\}.
\end{equation}
{Let us denote by $W$ the set of the right side of~\eqref{opena}. Let $(h,y)\in W$. Then, $y\in \alpha_{h\m g}(M_{g^{-1}h}\cap U)$, and as $(h,y)\in U\times V\subseteq \Gamma_\alpha$ we have that $y\in M_{h^{-1}}$. Hence $\alpha_{g^{-1}h}(y)\in \alpha_{g^{-1}h}(M_{h^{-1}}\cap \alpha_{h^{-1}g} (M_{g^{-1}h}\cap U))\subseteq U\cap M_{g^{-1}},$  and since $[h,y]=[g,\alpha_{g^{-1}h}(y)]$ we obtain $(h,y)\in\pi^{-1}(\U_{g,x})\cap (V\times U)$, which yields the inclusion $W\subseteq\pi^{-1}(\U_{g,x})\cap (V\times U)$. The other inclusion is clear.} 

 For  $(h,y)\in W={ \pi^{-1}(\U_{g,x})\cap (V\times U) }$, we have that
 \begin{equation}\label{eqq}\pi(h,y)=[h,y]=[g,\alpha_{g^{-1}h}(y)]=\beta_g([d(g),\alpha_{g^{-1}h}(y)])=\beta_g(\iota(\alpha_{g^{-1}h}(y))).\end{equation}
On the other hand,  let $L_{g^{-1}}: r^{-1}(r(g))\ni l\mapsto g\m l\in  r^{-1}(d(g))$ then by \eqref{opena}  the set $\widetilde W:=(L_{g^{-1}}\times {\rm id}_U)(W)$ is  an open subset of $G\times_p M$ contained in $\Gamma_{\alpha},$ and by \eqref{eqq} we have that \begin{equation}\label{eq-pi}
    \pi|_W=\beta_g\circ \iota\circ \alpha|_{\widetilde W}\circ(L_{g^{-1}}\times {\rm id}_U)|_W.
\end{equation} which implies that $\pi|_W$ is smooth.  
Now, to show that $\pi$ is a submersion note that $\alpha|_{\widetilde W}$ is a submersion and that the other maps on the right-hand side of \eqref{eq-pi} are diffeomorphisms. 

For the uniqueness of  $\mathcal{A}$ suppose that $\mathcal{B}$ is another differential structure of $M_G$ such that $\pi$ is a surjective submersion. Then, the identity maps  ${\rm id}:(M_G,\mathcal{A})\to (M_G,\mathcal{B})$  and ${\rm id}:(M_G,\mathcal{B})\to (M_G,\mathcal{A})$ are smooths (see \cite[Proposition 4.29]{Lee}). Thus the charts on $\mathcal{A}$ are compatible with the charts on $\mathcal{B}$ and vice versa. Therefore $\mathcal{A}=\mathcal{B}.$

(2) First of all notice that  $\tau: G\times_p M\ni (h,x)\mapsto r(h)\in G_0$ is smooth. On the other hand, since $t\circ \pi=\tau$, it follows from~\cite[Proposition 4.29]{Lee} that $t$ is smooth. Now observe that $\beta\circ({\rm id}_G\times\pi)=\pi\circ(m\times {\rm id}_{M})$, where $m: G^2\to G$ is the multiplication map. Thus,  by the fact that ${\rm id}_G\times \pi$ is a submersion and ~\cite[Proposition 4.29]{Lee} we conclude that $\beta$ is smooth.
\end{proof}

To endow $M_G$ with a structure of smooth manifold, all that remains is to prove that $M_G$ is Hausdorff. It is a well-known fact that the Hausdorff property is not  necessarily preserved by quotient spaces.  However, by \cite[Corollary 4.5]{MP1}  we can solve this by assuming that the graph of the map $\alpha: \Gamma_\alpha \to M$ is closed in $G\times M\times M.$ 

We  say that an  {\em isomorphism} between smooth partial actions of $G$ {is an isomorphism of set-theoretic partial actions} which is a diffeomorphism.


\begin{thm}\label{th2}
{\rm {Let $G$ be a star open Lie groupoid and $\alpha=(M_g, \alpha_g)_{g\in G}$ be a graph open smooth partial action of $G$ on the smooth manifold $M.$}
Then, the action $\beta: G\times_t M_G \to M_G$, given in~\eqref{eq_globalization}, is a graph open smooth globalization  if and only if
\begin{equation*}\label{gra}\gra=\{(g,x,\alpha_g(x))\in G\times M\times M\mid (g,x)\in \Gamma_\alpha \}\end{equation*}
is a closed subset of $G\times M\times M$. Moreover, in this case, this globalization is unique up to isomorphism.}
\end{thm}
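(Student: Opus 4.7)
The strategy is to observe that, by combining Lemma~\ref{properties} and Proposition~\ref{differentiable_t}, the quotient $M_G$ already carries a second countable differential structure $\mathcal{A}$ for which $\pi:\M\to M_G$ is a surjective submersion, $\iota:M\to M_G$ is a diffeomorphism onto the open subset $\iota(M)$, and $\beta:G\times_t M_G\to M_G$ is a smooth global action extending $\alpha$ through $\iota$. Thus $\beta$ qualifies as a smooth globalization of $\alpha$ exactly when $M_G$ is Hausdorff, so the entire content of the equivalence reduces to characterizing the Hausdorff property of $M_G$ in terms of closedness of $\gra$ in $G\times M\times M$.

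For the ``if'' part I would exploit that $\pi$ is a surjective open map (Lemma~\ref{properties}), so that $M_G$ is Hausdorff if and only if the graph $R=\{((g,x),(h,y))\in\M\times\M\mid (g,x)\sim(h,y)\}$ of the equivalence relation~\eqref{equivr} is closed in $\M\times\M$. The continuous map $\Phi((g,x),(h,y))=(h\m g,x,y)$, which is well defined on the closed subset $S=\{((g,x),(h,y))\in\M\times\M\mid r(g)=r(h)\}$, identifies $R$ with $S\cap\Phi\m(\gra)$. Hence once $\gra$ is closed, $R$ is closed in $\M\times\M$ and $M_G$ becomes Hausdorff, so that the structural results already proved promote $\beta$ to a smooth globalization.

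For the converse I plan to construct a smooth test map whose diagonal preimage is exactly $\gra$. On the subset $\M\times M\subseteq G\times M\times M$, which is closed by continuity of $p,d$ and Hausdorffness of $G_0$, define
\begin{equation*}
F:\M\times M\to M_G\times M_G,\qquad F(g,x,y)=(\beta_g(\iota(x)),\iota(y)).
\end{equation*}
This map is smooth, and a direct computation with~\eqref{equivr} gives $\beta_g(\iota(x))=[g,x]$ and $\iota(y)=[p(y),y]$, so the equality $[g,x]=[p(y),y]$ forces $p(y)=r(g)$, $x\in M_{g\m}$, and $y=\alpha_g(x)$; these conditions are conversely plainly sufficient. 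Thus $\gra=F\m(\Delta_{M_G})$, and Hausdorffness of $M_G$ makes $\Delta_{M_G}$ closed, whence $\gra$ is closed first in $\M\times M$ and therefore in $G\times M\times M$.

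For uniqueness, given another smooth globalization $\beta'$ on a manifold $N$ with embedding $\iota':M\to N$, the set-theoretic uniqueness from~\cite[Theorem 4]{NY} yields a bijection $\Psi:M_G\to N$ compatible with the actions and embeddings. Pulling back the smooth structure of $N$ along $\Psi$ produces an atlas $\mathcal{B}$ on $M_G$, and since the identity $\Psi\circ\pi(g,x)=\beta'_g(\iota'(x))$ factors through the diffeomorphism $G\times_p M\to G\times_{p'}\iota'(M)\subseteq G\times_{p'}N$ followed by the action submersion $\beta':G\times_{p'}N\to N$, the projection $\pi:\M\to(M_G,\mathcal{B})$ is a surjective submersion. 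The uniqueness clause of Proposition~\ref{differentiable_t}(1) then forces $\mathcal{A}=\mathcal{B}$, so $\Psi$ is a diffeomorphism and hence an isomorphism of smooth globalizations. The main technical obstacle I anticipate is the careful verification of the equality $\gra=F\m(\Delta_{M_G})$ while keeping the argument confined to the closed subset $\M\times M\subseteq G\times M\times M$; the rest of the proof consists essentially of assembling the structural results already established in this section.
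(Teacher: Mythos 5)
Your proposal is correct, and its overall skeleton for the main equivalence matches the paper's: both reduce the problem, via Lemma~\ref{properties} and Proposition~\ref{differentiable_t}, to the statement that $\beta$ is a smooth globalization precisely when $M_G$ is Hausdorff, and then identify Hausdorffness of $M_G$ with closedness of $\gra$. The difference is that the paper simply cites \cite[Corollary 4.5]{MP1} for that last identification, whereas you reprove it: the openness of $\pi$ reduces Hausdorffness to closedness of the relation $R\subseteq \M\times\M$, which you pull back from $\gra$ via $((g,x),(h,y))\mapsto (h\m g,x,y)$ on the closed set $\{r(g)=r(h)\}$, and conversely you exhibit $\gra$ as $F\m(\Delta_{M_G})$ inside the closed subset $\M\times M$; both computations check out against the relation~\eqref{equivr}, so your version is self-contained where the paper's is not. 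The uniqueness argument is where you genuinely diverge: the paper constructs the comparison map $\widetilde\phi:M_G\to\widetilde M_G$ by hand, shows it is a smooth bijection of constant rank $\dim M$, and invokes the global rank theorem; you instead pull back the smooth structure of $N$ along the set-theoretic bijection and appeal to the uniqueness clause of Proposition~\ref{differentiable_t}(1). Your route is arguably cleaner, but note that both arguments lean on the fact that the second globalization's action map is a submersion on $G\times_{p'}N$; this is justified because, with $G$ star open, $G\times_{p'}N=\bigcup_{e\in G_0}d\m(e)\times p'^{-1}(e)$ is open in $G\times N$, so the second globalization is automatically graph open and the differential formula recorded after Definition~\ref{def_partialaction} applies --- a one-line observation worth making explicit, since the paper asserts it without comment as well.
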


\begin{proof}In the set-theoretic level, the action $\beta$ is a globalization for $\alpha,$ in particular $\iota$ and $\beta$ satisfy the conditions {(i)-(iii)} from Definition~\ref{defglobaSM}. 
By (3) of Lemma~\ref{properties},  $\iota(M)$ is open, and as $\iota$ is a diffeomorphism onto its image, we obtain (i) of Definition~\ref{defglobaSM}. Now, by item (2) of Proposition~\ref{differentiable_t} the maps $t,\beta$  are smooth,  and we have that the action $\beta$ is a globalization for $\alpha$ if and only if $M_G$ is Hausdorff. By \cite[Corollary 4.5]{MP1}, this is equivalent to that $\Gr(\alpha)$ is closed in $G\times M\times M$, showing the first assertion. To show that $\beta$ is graph open observe that from the equality $G\times_t M_G=\bigcup\limits_{e\in G_0}d^{-1}(e)\times (M_G)_e$ and  the fact that $G$ is star open, it is enough to verify that every $(M_G)_e$ is open in $M_G,$ for any  $ e\in G_0.$ For  this  note that $\pi\m ((M_G)_e)=(r^{-1}(e)\times M)\cap \M$  is open in $\M$.

Let us show the assertion about uniqueness. Suppose that $\widetilde \beta:\Gamma_{\widetilde\beta} \subseteq G\times \widetilde M_G\to \widetilde M_G$ is another globalization of $\alpha$, and let $j: M\to \widetilde M_G$ be the corresponding embedding of $M$ into $\widetilde M_G$. Let  $\phi:G\times_p M\ni (g,x)\mapsto \widetilde \beta_g(j(x))\in  M_G$. Then  $\phi$ is well-defined and  smooth, since $x\in M_{d(g)}$ implies $j(x)\in (\widetilde {M}_G)_{d(g)}$.  We claim that $\phi(g,x)=\phi(h,y)$ if and only if $[g,x]=[h,y]$. Indeed,  suppose that $\widetilde \beta_g(j(x))=\widetilde \beta_h(j(y))$. Then,  $\widetilde \beta_g(j(x))=\widetilde \beta_h(j(y)) \in (\widetilde M_G)_{r(g)}\cap(\widetilde M_G)_{r(h)}$, and the equality $\widetilde M_G=\bigsqcup\limits_{e\in G_0}(\widetilde M_G)_e$ implies $r(g)=r(h)$,  therefore
$$j(x)=\widetilde\beta_{g^{-1}h}(j(y))\in j(M)\cap (\widetilde M_G)_{g^{-1}h} \quad\mbox{ and }\quad j(y)=\widetilde\beta_{h^{-1}g}(j(y))\in j(M)\cap (\widetilde M_G)_{h^{-1}g}.$$
Thus $$j(x)=\widetilde\beta_{g^{-1}h}(j(y))\in j(M)\cap (\widetilde M_G)_{g^{-1}h} \cap \widetilde\beta_{g^{-1}h}(j(M)\cap (\widetilde M_G)_{h^{-1}g})=j(M_{g^{-1}h}),$$ yielding $x\in M_{g^{-1}h}$. Hence, 
$j(x)=\widetilde\beta_{g^{-1}h}(j(y))=j(\alpha_{g^{-1}h}(y)),$
which shows that $[g,x]=[h,y].$ Conversely, assume that  $[g,x]=[h,y].$ Then
$$j(x)=j(\alpha_{g^{-1}h}(y))=\beta_{g^{-1}h}(j(y)) \quad\mbox{and}\quad\beta_g(j(x))=\beta_h(j(y)),$$ as required.
Furthermore, $\phi$ induces an injective  map $\widetilde\phi: M_G\to \widetilde {M}_G$ such that $\widetilde\phi\circ\pi=\phi$. Since $\widetilde {M}_G$ is the $\widetilde\beta$-orbit of $j(M)$ we have that $\widetilde\phi$ is bijective, and as  $\phi$ is smooth it follows from~\cite[Proposition 4.29]{Lee} that $\widetilde\phi$ is  smooth. In addition, it is immediate to see that $\widetilde \phi$ is a morphism in ${\bf Pact}(G)$.  Since $j$ is an embedding and $\beta$ is a submersion, we have that  $\phi$ has a constant rank equal to $\dim M$. Thus, as $\pi$ is a submersion, $\widetilde\phi$ has a constant rank equal to $\dim M$. Hence $\widetilde\phi$ is a bijection with constant rank, and therefore a diffeomorphism.
\end{proof}

\begin{exe}{\rm
Let $TM=\bigsqcup\limits_{x\in M} T_xM$ be the tangent bundle of $M$. An element of $TM$ can be re\-presented as $(x,v)$, where $x\in M$ and $v\in T_xM$, and the natural projection $q:TM\ni (x,v)\mapsto x\in M$ is a submersion (see for instance ~\cite{Lee}). A  { graph open} smooth partial action $\theta=(M_g, \theta_g)_{g\in G}$ of $G$ on $M$ induces a {graph open} smooth partial action $\Theta=((TM)_g, \Theta_g)_{g\in G}$ on $TM$, where $(TM)_g=q^{-1}(M_g)$ and $\Theta_g(x,v)=(\theta_g(x),d\theta_g(x)v),$ for any $(x,v)\in (TM)_{g\m}$.  Indeed, the conditions (i) and (ii) from Definiton~\ref{ny} are clear, while condition (iii) follows from the chain rule. Further, if $p:M\to G_0$ is the anchor map of $M$, then $q\circ p$ is the anchor map of $TM$. Also, since $\theta: \Gamma_{\theta}\to M$ is smooth it follows that $\Theta: \Gamma_{\Theta}\to TM$ is smooth, which shows that $\Theta$ is a { graph open} partial action of $G$ on $TM$. 

On the other hand, if  $\phi:G\times_t N\to N$ is a globalization for $\theta$, then it is routine to prove that the action $\Phi:G\times_{\tau} TN\to TN$ induced by $\phi$, where $\tau=q\circ t$,  is a globalization for $\Theta.$ That means that the functor that sends a partial action of $G$ on $M$ to its globalization,  commutes with the functor $T$ that sends $M$ to the tangent bundle $TM$.}
\end{exe}

It is known that several topological  properties of  $M$ may not necessarily carry over to $M_G$. In the example below we will see that the property of being parallelizable does not transfer to $M_G$. For the reader's convenience we recall that a manifold $M$ of dimension $n$ is {\em parallelizable} if there exists a set $\{\sigma_1,\ldots,\sigma_n\}$ of (global) sections of $q:TM \to M$, i.e. $q\circ \sigma_i=id_M$, such that  $\{\sigma_1(x),\ldots,\sigma_n(x)\}$ is a basis of $ T_xM$ for all $x\in M,$ where we  view $T_xM$ as the fiber of $x$ under $q$. For instance, it is well known that every Lie group are parallelizable (see~\cite[Corollary 8.39]{Lee}). A well-known consequence of the Hairy Ball Theorem is that the sphere $\mathbb{S}^2$ is not parallelizable.

\begin{exe}{\rm The Mobius group ${\rm PSL}(2,\mathbb{C})$ acts smoothly on the Riemman sphere $\mathbb{S}^2=\mathbb{C}\cup\{\infty\}$, and it is  a globalization of its restriction to the complex plane $\mathbb{C}$, which is a graph open partial action of ${\rm PSL}(2,\mathbb{C})$ on $\mathbb{C}$ (see~\cite[Proposition 3.16]{KL}). The plane $\mathbb{C}$ is parallelizable, however, the sphere $\mathbb{S}^2$ is not parallelizable.}
\end{exe}

\section{On the orbit  map and orbit equivalence spaces}\label{s4}
This section is devoted to exploring the orbit map and the equivalence space arising from the partial action of a groupoid on a smooth manifold. Some of the ideas presented are inspired by classical results on multiple quotients resulting from a (global) Lie group action. 

The following result states that each orbit map {of} a smooth global action has a constant rank. As we shall see below, this observation has interesting implications for the manifold structure of the stabilizer and orbits of a smooth partial action.

\begin{prop}\label{globalrank}{\rm  Let $\beta=(M_g,\beta_g)_{g\in G}$ be a  smooth global action of  a groupoid $G$ on a smooth manifold $M,$ then for any $x\in M,$ the orbit map $\beta^x: G^x\to M,$ hasca constant rank.}
\end{prop}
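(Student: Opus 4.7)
My plan is to reduce the claim to a local rank computation by exhibiting, for any pair $g_{0},g_{1}\in G^{x}$, a local diffeomorphism $\Phi$ of $G^{x}$ sending $g_{0}$ to $g_{1}$, together with a local diffeomorphism $\Psi$ of an open subset of $M$, so that $\beta^{x}\circ\Phi=\Psi\circ\beta^{x}$ on a neighborhood of $g_{0}$. The chain rule then forces $\operatorname{rank}(d\beta^{x}|_{g_{0}})=\operatorname{rank}(d\beta^{x}|_{g_{1}})$, which is what is wanted.

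The construction passes through $h:=g_{1}g_{0}^{-1}$, which satisfies $d(h)=r(g_{0})$ and $r(h)=r(g_{1})$. I use the standard fact that every arrow in a Lie groupoid admits a smooth \emph{local bisection}: an open neighborhood $V$ of $d(h)=r(g_{0})$ in $G_{0}$ together with a smooth $\sigma\colon V\to G$ satisfying $d\circ\sigma=\mathrm{id}_{V}$, $\sigma(r(g_{0}))=h$, and $r\circ\sigma\colon V\to r(\sigma(V))$ a diffeomorphism onto an open subset of $G_{0}$. (At the infinitesimal level this amounts to choosing at $h$ a common linear complement to $\ker dd_{h}$ and $\ker dr_{h}$ in $T_{h}G$, which exists because both kernels have the same codimension $\dim G_{0}$.) Writing $\sigma^{-1}$ for the inverse bisection $v\mapsto\sigma((r\circ\sigma)^{-1}(v))^{-1}$ defined on $r(\sigma(V))$, I set
$$U=(r|_{G^{x}})^{-1}(V),\qquad \Phi(g)=\sigma(r(g))\,g,\qquad g\in U,$$
and verify that $\Phi$ lands in $G^{x}$, that $\Phi(g_{0})=hg_{0}=g_{1}$, and that $\Phi$ is a diffeomorphism onto an open subset of $G^{x}$ with inverse $g'\mapsto\sigma^{-1}(r(g'))\,g'$.

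In parallel I define $\Psi\colon p^{-1}(V)\to M$ by $\Psi(m)=\beta_{\sigma(p(m))}(m)$; smoothness of $\sigma$ and $\beta$ makes $\Psi$ smooth, and the analogous construction with $\sigma^{-1}$ produces a two-sided inverse, so $\Psi$ is a diffeomorphism onto its image. The required equivariance is immediate from globality of $\beta$:
$$\beta^{x}(\Phi(g))=\beta_{\sigma(r(g))\,g}(x)=\beta_{\sigma(r(g))}(\beta_{g}(x))=\Psi(\beta^{x}(g)),\qquad g\in U.$$
Applying $d$ and using that $d\Phi|_{g_{0}}$ and $d\Psi|_{\beta^{x}(g_{0})}$ are linear isomorphisms yields $\operatorname{rank}(d\beta^{x}|_{g_{0}})=\operatorname{rank}(d\beta^{x}|_{g_{1}})$.

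The main technical hurdle is the existence of the local bisection $\sigma$ through the arbitrary arrow $h$ and the smooth behavior of its inverse; once that is in hand, the rest of the argument is a formal chain-rule computation. An equivalent but more abstract route would be to form the action Lie groupoid $G\ltimes M$, whose source $(g,m)\mapsto m$ is automatically a submersion since $d$ is, and whose target $(g,m)\mapsto\beta_{g}(m)$ is conjugate to the source via the smooth involution $(g,m)\mapsto(g^{-1},\beta_{g}(m))$; then $\beta^{x}$ is precisely the target map of $G\ltimes M$ restricted to the source fibre over $x$, and one invokes the general principle that in any Lie groupoid the target has constant rank on each source fibre.
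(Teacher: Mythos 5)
Your proof is correct, and while it rests on the same homogeneity idea as the paper's, its implementation is genuinely different and in fact more careful. The paper's proof makes $G$ act globally and transitively on $G^x=d^{-1}(p(x))$ by left multiplication, $\gamma_k(g)=kg$, checks that $\beta^x$ is a $G$-map (i.e. $\beta^x\circ\gamma_k=\beta_k\circ\beta^x$), and then compares the ranks at $g_1$ and $g_2=kg_1$ by the chain rule, using that $\gamma_k$ and $\beta_k$ are diffeomorphisms. You replace the single translation by $k=g_1g_0^{-1}$ with the family $\Phi(g)=\sigma(r(g))\,g$ coming from a local bisection $\sigma$ through $k$, paired with the local diffeomorphism $\Psi(m)=\beta_{\sigma(p(m))}(m)$ of $M$. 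What the bisection buys is essential: the paper's $\gamma_k$ is defined only on $G^x_{k^{-1}}=\{g\in G^x\mid r(g)=d(k)\}$, which for $\dim G_0>0$ is a proper submanifold of $G^x$, so the paper's commutative diagram literally compares only the ranks of the restrictions of $\beta^x$ to the $r$-fibres of $G^x$; your $\Phi$ is a diffeomorphism of a full open neighborhood of $g_0$ in $G^x$ onto one of $g_1$, so the chain rule gives equality of the ranks of $d\beta^x$ on all of $T_{g_0}G^x$ and $T_{g_1}G^x$, which is what the statement asserts. The cost is invoking the existence of smooth local bisections through an arbitrary arrow; your sketch via a common linear complement of $\ker dd_h$ and $\ker dr_h$ (both of codimension $\dim G_0$) is the standard argument and is adequate. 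Your closing reformulation via the action groupoid $G\ltimes M$ --- identifying $\beta^x$ with the target map restricted to a source fibre and citing constancy of its rank there --- is a valid, more structural packaging of the same proof.
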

\begin{proof} Let $p$   be the anchor map of $\beta,$ by \cite[Corollary 5.13]{Lee} we know that the set $G^x=\{g\in G\mid x\in M_{d(g)}\}=d^{-1}(p(x))$ is an embedded submanifold of $G$ of codimension $\dim G_0.$ Let $t: G^x\ni g\mapsto r(g)\in G_0$ and consider $\gamma: G\times_t G^x\ni (h,g)\mapsto hg\in G^x.$ It is not difficult to check that $\gamma$ is a transitive global action, with $G^x_h=\{g\in G\mid r(h)=r(g) \}.$ We shall check that $\beta^x$  is a $G$-map between $\beta$ and $\gamma,$ that is $\beta^x$ satisfies conditions (i) and (ii) in Definition \ref{mor}. For (i), take  $h\in G^x$ and $g\in G^x_h,$ then $$\beta^x(g)=\beta_g(x)\in M_{g}=M_{r(g)}=M_{r(h)}=M_{h}.$$ To check (ii) observe that for any $g\in G^x_h$ we have  $$\beta^x(\gamma_h(g))=\beta^x(hg)=\beta_{hg}(x)=\beta_h(\beta_g(x))=\beta_h(\beta^x(g)),$$ and so $\beta^x\circ \gamma_h=\beta_h\circ \beta^x$ on $G^x_h.$ Now, to show that $\beta^x$ has a constant rank, take $g_1,g_2\in G^x.$ Since $\gamma$ is transitive, there exists $k\in G$ such that $\gamma_k(g_1)=g_2$,  and thus $\beta^x(g_2)=\beta^x\circ \gamma_k(g_1)=\beta_k\circ \beta^x (g_1).$ This implies that the  following diagram commutes
\[\begin{tikzcd}
T_{g_1}G^x \arrow{r}{d\beta^x(g_1)} \arrow[swap]{d}{d\gamma_k(g_1)} & T_{\beta^x(g_1)}G^x\arrow{d}{d\beta_k(\beta^x(g_1))} \\
T_{g_2}G^x \arrow{r}{d\beta^x(g_2)} & T_{\beta^x(g_2)}G^x
\end{tikzcd}.
\]
As $\gamma_k$ and $\beta_k$ are diffeomorphisms we conclude that the rank of  $\beta^x$ at the points $g_1$ and $g_2$ coincide, which implies that $\beta^x$ has a constant rank, as desired.
\end{proof}

\begin{cor}\label{cor_constantrank}{\rm Let $\alpha=(\alpha_g, M_g)_{g\in G}$  be a globalizable graph open smooth partial action of $G$ on $M$. Then,  $G$ is star open  and the orbit map $\alpha^x:G^x\to M$ has a constant rank for any $x\in M$.}
\end{cor}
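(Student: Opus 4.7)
The plan is to establish the two conclusions separately: star openness of $G$ requires only the graph openness of $\alpha$ together with a dimension count, while the constant-rank statement follows by pulling back Proposition~\ref{globalrank} through the globalization.

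For the first assertion, I would pick any $x \in M$ and use that the slice $G^x = \{g \in G : x \in M_{g^{-1}}\}$ is open in $G$, being the preimage of $\Gamma_\alpha$ under the continuous map $g \mapsto (g,x)$. This slice is nonempty since it contains the identity $p(x)$, and the chain of inclusions $x \in M_{g^{-1}} \subseteq M_{r(g^{-1})} = M_{d(g)}$ forces $p(x) = d(g)$, so $G^x \subseteq d^{-1}(p(x))$. Because $d : G \to G_0$ is a submersion, $d^{-1}(p(x))$ is an embedded submanifold of $G$ of codimension $\dim G_0$, and a nonempty open subset of $G$ cannot be contained in a proper embedded submanifold; hence $\dim G_0 = 0$, and Proposition~\ref{soc} yields that $G$ is star open.

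For the constant-rank claim, let $\beta = (N_g,\beta_g)_{g\in G}$ be a smooth globalization of $\alpha$ on $N$, with embedding $\iota : M \to N$ and anchor $q : N \to G_0$. Taking $g = e \in G_0$ in condition~(i) of Definition~\ref{defglobaSM1} gives $\iota(M_e) = \iota(M) \cap N_e$, so $q \circ \iota = p$; consequently $G^x \subseteq d^{-1}(p(x)) = G^{\iota(x)}$, and the first part of the corollary makes $G^{\iota(x)}$ an open submanifold of $G$, so $G^x$ is open in $G^{\iota(x)}$. Applying Proposition~\ref{globalrank} to the global action $\beta$ at the point $\iota(x)$ shows that $\beta^{\iota(x)}:G^{\iota(x)} \to N$ has constant rank, and its restriction to the open submanifold $G^x$ inherits this constant rank. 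Condition~(ii) of Definition~\ref{defglobaSM1} gives $\beta_g(\iota(x)) = \iota(\alpha_g(x))$ for $g \in G^x$, that is, $\beta^{\iota(x)}|_{G^x} = \iota \circ \alpha^x$; since $\iota$ is a diffeomorphism onto the open subset $\iota(M) \subseteq N$, postcomposition with $\iota$ preserves ranks pointwise, and hence $\alpha^x$ has constant rank on $G^x$.

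Nothing here is particularly delicate: the conceptual crux is recognizing $\alpha^x$ as an open restriction of the genuine orbit map $\beta^{\iota(x)}$, at which point Proposition~\ref{globalrank} does the work. The only bookkeeping issues are the identification $q \circ \iota = p$ (needed to match the indexing sets $G^x$ and $G^{\iota(x)}$) and the dimensional argument that openness of the nonempty slice $G^x$ forces the codimension of $d^{-1}(p(x))$ to vanish.
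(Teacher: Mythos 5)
Your proof is correct and follows essentially the same route as the paper: openness of the nonempty slice $G^x$ inside the embedded submanifold $d^{-1}(p(x))$ of codimension $\dim G_0$ forces $\dim G_0=0$, and the constant-rank claim comes from viewing $\alpha^x$ as the restriction of the global orbit map $\beta^{\iota(x)}$ (which has constant rank by Proposition~\ref{globalrank}) to the open subset $G^x$. Your write-up is somewhat more careful than the paper's about the identification $q\circ\iota=p$ and the role of $\iota$, but the argument is the same.
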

\begin{proof}
Let $x\in M$ and  $\beta=(N_g,\beta_g)$ be a globalization for $\alpha$.  Let $G_\alpha^x=\{g\in G\mid x\in M_{g^{-1}}\}$ and $G_{\beta}^x=\{g\in G\mid x\in N_{d(g)}\}=d^{-1}(p(x))$, we have that $\alpha^x: G^x_\alpha \to M$ and $\beta^x: G^x_\beta\to N$ are orbit maps of $\alpha$ and $\beta$, repectively. By~\cite[Corollary 5.13]{Lee}, $G_\beta^x$ is an embedded submanifold of $G$ of codimension equal to $\dim G_0$, and as $G_\alpha^x$ is an open subset of $G$ contained in $G_\beta^x$, we have that $ \dim G^x_\beta=\dim G$. So, { by Proposition~\ref{soc}} $\dim G_0=0$ and the first assertion follows. Now, $\alpha^x$ has a constant rank since it is the restriction to $\beta ^x$ to an open set,  and $\beta ^x$ has a constant rank thanks to Proposition~\ref{globalrank}, this finishes the proof.
\end{proof}

The following result is a straightforward observation concerning the structure of the stabilizer and the orbit of a smooth partial action.

\begin{cor}\label{lemma_orbit_stab}{\rm  Let $G$ be a Lie groupoid, {$M$ be a smooth manifold, and $\alpha=(\alpha_g, M_g)_{g\in G}$ be a graph open smooth partial action of $G$ on $M$} and let $x\in M$.  Then  ${\rm 
stab}_\alpha(x)$ is an embedded Lie subgroup of $G_{p(x)}$ provided  that {the} orbit map $\alpha^x: G^x\to M$ has  a constant rank (for instance if $\alpha$ has a smooth globalization). Moreover, if $\alpha$ is also free, then the orbit $\alpha^x(G^x)$ of $x$ is an immersed submanifold of $M$.}
\end{cor}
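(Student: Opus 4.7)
The plan is to realise ${\rm stab}_\alpha(x)=(\alpha^x)\m(x)$ as a level set of the constant-rank map $\alpha^x$ and apply the Constant-Rank Level Set Theorem, then handle the free case via the observation that an injective constant-rank map is automatically an immersion.

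First I would note that the graph-openness of $\alpha$ makes $G^x$ an open submanifold of $G$, since it is the preimage of the open set $\Gamma_\alpha$ under the continuous map $g\mapsto (g,x)$. By hypothesis $\alpha^x:G^x\to M$ has constant rank---a property that is automatic when $\alpha$ is globalizable, thanks to Corollary~\ref{cor_constantrank}. Applying the Constant-Rank Level Set Theorem \cite[Theorem~5.12]{Lee} to $\alpha^x$ at the value $x$ then yields that $(\alpha^x)\m(x)={\rm stab}_\alpha(x)$ is a properly embedded submanifold of $G^x$, and hence an embedded submanifold of $G$.

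Next I would transfer this to the isotropy group. Since ${\rm stab}_\alpha(x)\subseteq G_{p(x)}$ and $G_{p(x)}$ is itself an embedded submanifold of $G$ carrying its standard Lie group structure (a classical fact for Lie groupoids), ${\rm stab}_\alpha(x)$ is in particular an embedded submanifold of $G_{p(x)}$. It is algebraically a subgroup, so the multiplication and inversion of $G_{p(x)}$ restrict to smooth operations on it, making it an embedded Lie subgroup.

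For the second assertion, assume $\alpha$ is also free. I would first upgrade freeness into full injectivity of $\alpha^x$: if $\alpha^x(g_1)=\alpha^x(g_2)$, the disjoint decomposition $M=\bigsqcup_{e\in G_0}M_e$ forces $r(g_1)=r(g_2)$, so $g_2\m g_1$ is composable, and then the equation $\alpha_{g_2\m g_1}(x)=x$ combined with freeness gives $g_2\m g_1=p(x)$, hence $g_1=g_2$. Together with constant rank, the Rank Theorem now forces $\alpha^x$ to be an immersion (otherwise in local canonical coordinates $\alpha^x$ would factor through a non-injective projection), so $\alpha^x(G^x)$, endowed with the smooth structure transported from $G^x$ via the bijection $\alpha^x$, is by definition an immersed submanifold of $M$. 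The main---really bookkeeping-level---obstacle is keeping track of the various embedded structures ${\rm stab}_\alpha(x)$ carries as a subset of $G^x$, of $G$, and of $G_{p(x)}$, and verifying compatibility of their smooth structures; once the constant-rank framework is in place, all the remaining steps are essentially formal.
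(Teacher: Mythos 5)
Your proposal is correct and follows essentially the same route as the paper: the Constant-Rank Level Set Theorem \cite[Theorem 5.12]{Lee} identifies ${\rm stab}_\alpha(x)=(\alpha^x)^{-1}(x)$ as an embedded submanifold, the subgroup structure upgrades it to an embedded Lie subgroup, and in the free case injectivity of $\alpha^x$ plus constant rank yields an immersion and hence an immersed orbit. The only differences are expository: you spell out the injectivity argument and the passage from the level set in $G^x$ to an embedded submanifold of $G_{p(x)}$, which the paper leaves implicit.
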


\begin{proof} Suppose first that  the   orbit map $\alpha^x: G^x\to M,$ has a constant rank.
From \cite[Theorem 5.12]{Lee} it follows that ${\rm stab}_\alpha(x)=(\alpha^x)^{-1}(x)$ is an embedded submanifold of the Lie group $G_{p(x)}$, and thus a Lie subgroup of $G_{p(x)}$ by~\cite[Theorem 7.11]{Lee}.  For the last assertion, note that if $\alpha$ is also free, then $\alpha^x$ is injective and thus an immersion. Hence $\alpha^x(G^x)$ is an immersed submanifold of $M$ thanks to \cite[Proposition 5.18]{Lee}.
\end{proof}




\begin{rem}\label{remark_transitive} {\rm Let $\overline{G}^x=\{\A\in \overline H^x\mid x\in X_{h^{-1}}\},$ where $\overline{H}^x$ is defined in \eqref{barhx}. We have from  Theorem~\ref{th1} that the map $\varphi|_{\overline{G}^x}:\overline{G}^x\to X$ is an isomorphism between the restriction of $\delta$ to $\overline{G}^x$ and $\alpha$. In the case that $\alpha$ is free and transitive, the restriction $\varphi|_{\overline{G}^x}$ coincides with the orbit map $\alpha^x: G^x\to X$, and  the action $\delta: G\times _t H^x\ni (g,h) \mapsto gh\in H^x$ is a globalization for $\alpha$ with corresponding  injection $(\alpha^x)^{-1}: X\to G^x\subseteq H^x$. With this in mind, we can address the problem of globalization for free and transitive smooth partial actions.}
\end{rem}

\begin{thm}\label{thr1_smooth}{\rm 
Let $\alpha=(M_g, \alpha_g )_{g\in G}$ be a free and transitive  graph open smooth partial action of a Lie groupoid $G$ on a smooth manifold $M$. Then the following assertions are equivalent:
\begin{enumerate}
\item[(1)] The orbit map $\alpha^x:G^x\to X$ has a  constant rank, for  all $x\in M$.
\item[(2)] The orbit map $\alpha^x:G^x\to X$ has a constant rank , for  some $x\in M$.
\item[(3)] $\alpha$ is globalizable.
\end{enumerate}}
\end{thm}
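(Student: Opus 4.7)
The plan is to prove the three implications $(3)\Rightarrow(1)\Rightarrow(2)\Rightarrow(3)$. The first is immediate from Corollary~\ref{cor_constantrank}, and the second is trivial (if $M=\emptyset$ all three statements are vacuous, so I assume $M\neq\emptyset$). The substantive work lies in $(2)\Rightarrow(3)$, where I intend to realize the set-theoretic candidate globalization of Remark~\ref{remark_transitive} as a smooth one.

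Fix $x_0\in M$ so that $\alpha^{x_0}\colon G^{x_0}\to M$ has constant rank. Because $\alpha$ is free the orbit map is injective, and because $\alpha$ is transitive it is surjective, so the global rank theorem (\cite[Theorem 4.14]{Lee}) upgrades $\alpha^{x_0}$ to a diffeomorphism. Before invoking Remark~\ref{remark_transitive} I would first verify that $G$ is star open. The key observation is that $p(x_0)\in G^{x_0}$ (since $x_0\in M_{p(x_0)}=M_{p(x_0)^{-1}}$) while $G^{x_0}$ is open in $G$ by graph openness, so the embedded submanifold $d^{-1}(p(x_0))\supseteq G^{x_0}$ of codimension $\dim G_0$ contains a nonempty open subset of $G$; this forces $\dim G_0=0$, and Proposition~\ref{soc} delivers that $G$ is star open.

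Consequently, $H^{x_0}=d^{-1}(p(x_0))$ is an open submanifold of $G$, and the inclusion $G^{x_0}\subseteq H^{x_0}$ is open, so $\iota=(\alpha^{x_0})^{-1}\colon M\to G^{x_0}\subseteq H^{x_0}$ is a diffeomorphism onto an open subset. The candidate globalization is the global action $\delta\colon G\times_t H^{x_0}\to H^{x_0}$, $(g,h)\mapsto gh$, from Remark~\ref{remark_transitive}. Its set-theoretic conditions (i)--(iii) of Definition~\ref{defglobaSM1} were already checked in Theorem~\ref{th1}, so what remains is the differentiable data: the fiber product $G\times_t H^{x_0}$ identifies with the open subset $G^2\cap(G\times H^{x_0})$ of $G^2$, whence $\delta$ is smooth as a restriction of the groupoid multiplication $m\colon G^2\to G$ (with image in $H^{x_0}$ because $d(gh)=d(h)=p(x_0)$); each $\delta_g$ is a diffeomorphism as a left translation; and $t=r|_{H^{x_0}}$ is smooth.

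The main subtlety I expect is that the two candidate smooth structures on $G^{x_0}$---inherited from $G$ (dimension $\dim G$) and inherited from $H^{x_0}$ (dimension $\dim G-\dim G_0$)---must coincide, and they do precisely because the reduction in the second paragraph yields $\dim G_0=0$. Once this is in place, $\delta$ together with $\iota$ is a smooth globalization in the sense of Definition~\ref{defglobaSM}, concluding $(2)\Rightarrow(3)$.
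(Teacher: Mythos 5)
Your proof is correct and follows essentially the same route as the paper: the implication $(2)\Rightarrow(3)$ is established by taking $\delta$ on $H^{x_0}=d^{-1}(p(x_0))$ from Remark~\ref{remark_transitive} as the candidate globalization and using the constant-rank hypothesis together with freeness and transitivity to make $(\alpha^{x_0})^{-1}$ a diffeomorphism onto the open subset $G^{x_0}$. Your additional verifications --- that $\dim G_0=0$ (so the two smooth structures on $G^{x_0}$ agree and $H^{x_0}$ is open in $G$) and that $\delta$ is itself smooth as a restriction of the groupoid multiplication --- are points the paper's proof leaves implicit, and they are welcome.
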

\begin{proof}
The implication (1) $\Rightarrow$ (2) is immediate, and (3) $\Rightarrow$ (1) follows from Corollary~\ref{cor_constantrank}. It remains to show  (2) $\Rightarrow$ (3). Suppose that for some $x\in M$ we have that $\alpha^x: G^x\to X$ has a constant rank. As $d:G\to G_0$ is a submersion, we have by \cite[Corollary 5.13]{Lee} that  $H^x=d^{-1}(p(x))$ is an embedded submanifold of $G$. {It is easy to see that $\overline{H}^x=H^x$ and that the global action $\delta$ of $G$ on $H^x$ is smooth (see (6)). As observed in Remark \ref{remark_transitive}  the action $\delta$}  is  a set- theoretic globalization of $\alpha$ with corresponding injection $(\alpha^x)^{-1}: X\to H^x$. It only remains to verify that $(\alpha^x)^{-1}$ is a diffeomorphism onto an open subset of $H^x$.  As $\alpha$ is graph open, then $G^x$ is open in $G$, and by assumption $\alpha^x: G^x\to X$ has a constant rank, which implies  that $(\alpha^x)^{-1}: X\to H^x$ is a diffeomorphism onto the open set $G^x$. Therefore $\delta$ is a globalization of $\alpha$, as required.
 
For the final assertion, notice that he action $\delta: G\times_t H^x\ni (g,h) \mapsto gh\in H^x$, defined in \eqref{delta}, is a globalization for $\alpha$, where $H^x=d^{-1}(p(x)).$ Suppose that a smooth action $\beta=(N_g,\beta_g)$ of $G$ on $N$ is another globalization for $\alpha.$  By Theorem~\ref{th1}, the orbit map $\beta^x: H^x\to N$ is a set-theoretic $G$-equivalence between $\delta$ and $\beta$. Now, as $\beta^x$ is a smooth bijection with constant rank (by Proposition~\ref{globalrank}), we conclude that $\beta^x$ is a diffeomorphism, as required. 
\end{proof}




Let $G$ be a Lie groupoid, $M$ be a smooth manifold, and $\alpha=(\alpha_g, M_g)_{g\in G}$ be a graph open smooth partial action of $G$ on $M.$ 
We denote by $M/G$ the {\it orbit equivalence space} of $M,$ that is the elements of $M/G$ are the orbits of $\alpha,$ namely the sets $G^x\cdot x$ defined in \eqref{or}. The space  $M/G$ is endowed with the quotient topology, in particular, the quotient map $\pi_G: M\to M/G$ is continuous. Also,  for any open subset $U$ of $M,$ we have that$$\pi\m_G(\pi_G(U))=\bigcup_{g\in G}\alpha_g(U\cap M_{g\m}),$$ which implies that  $\pi_G$ is an open map.  {We shall say that} a smooth partial action $\alpha=(M_g, \alpha_g)_{g\in G}$ of $G$ on $M$ is called  \textit{proper,} if the map ${\Lambda}: \Gamma_\alpha \to M\times M$ given by $(g,x)\mapsto (x, \alpha_g(x))$ is proper.

\begin{thm}\label{th3}
{\rm Let $G$ be a Lie groupoid, $M$ be a smooth manifold, and $\alpha=(\alpha_g, M_g)_{g\in G}$ be  a graph open smooth partial action of $G$ on $M.$ Suppose that $\alpha$ is free, proper, and for every $x\in M$ the orbit map $\alpha^{x}: G^x\to M$ has a constant rank. Then, there exists exactly one differentiable structure on $M/G$ such that the quotient map $\pi_G: M\to M/G$ is a submersion.
} 
\end{thm}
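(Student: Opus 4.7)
The plan is to adapt the classical quotient manifold theorem for Lie group actions (cf.~\cite[Theorem 21.10]{Lee}) to our groupoid setting, with the constant rank hypothesis on orbit maps playing the role that smoothness of the action on a Lie group usually plays.

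First I would establish the basic topological properties of $M/G$. Since $\alpha$ is proper, the map $\Psi\colon\Gamma_\alpha\to M\times M$, $(g,x)\mapsto(\alpha_g(x),x)$, is proper and hence closed; its image is the orbit equivalence relation $R$, so $R$ is closed in $M\times M$. Combined with the already noted fact that $\pi_G$ is open, this yields that $M/G$ is Hausdorff. Second countability passes from $M$ to $M/G$ through the continuous surjection $\pi_G$.

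The central step is a local slice theorem. Fix $x_0\in M$ and set $e=p(x_0)$. By freeness the orbit map $\alpha^{x_0}\colon G^{x_0}\to M$ is injective, and by hypothesis it has constant rank, so it is an injective immersion whose image is locally near $x_0$ an embedded submanifold of dimension $k:=\dim G^{x_0}=\dim G-\dim G_0$. Applying the Rank Theorem one picks an embedded transverse slice $S\subseteq M$ through $x_0$ of dimension $\dim M-k$ and verifies that the restricted action map $\mu\colon\Gamma_\alpha\cap(G\times S)\to M$, $(g,y)\mapsto\alpha_g(y)$, is, after shrinking, a submersion near $(e,x_0)$ whose restriction to a suitable transversal coordinate complement to $S$ is a local diffeomorphism onto a neighborhood of $x_0$. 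The uniqueness property — that after shrinking $S$ each orbit meets $S$ at most once — is where properness enters: if not, one finds sequences $y_n,z_n\in S$ with $y_n\neq z_n$, $y_n,z_n\to x_0$, and $g_n\in G$ with $(g_n,y_n)\in\Gamma_\alpha$ and $\alpha_{g_n}(y_n)=z_n$; properness of $\Psi$ extracts a convergent subsequence $g_n\to g$ with $\alpha_g(x_0)=x_0$, and freeness forces $g=e$, so that for large $n$ the local diffeomorphism property of $\mu$ near $(e,x_0)$ contradicts $y_n\neq z_n$.

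Once the slice theorem is in hand, the inverses $(\pi_G|_{S_{x}})^{-1}$ provide charts on $M/G$; smoothness of transition maps follows from the smoothness of $\alpha$ and the local diffeomorphism property of $\mu$, since the transition near a point $[x]$ is realized by locally selecting, for each $y$ in one slice, the unique point of the other slice in its orbit, expressible as a composition of $\mu$, its local inverse, and the action. Each slice gives a local smooth section of $\pi_G$, so $\pi_G$ is a submersion. Uniqueness is then standard: any other differentiable structure $\mathcal{A}'$ on $M/G$ for which $\pi_G$ is a submersion satisfies, by~\cite[Proposition 4.29]{Lee}, that both identity maps between $\mathcal{A}$ and $\mathcal{A}'$ are smooth, so the two structures coincide. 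The main obstacle is the slice theorem: combining the constant rank hypothesis (for transverse local structure) with properness and freeness (for uniqueness of the orbit intersection with $S$) in the absence of the global homogeneity enjoyed by Lie group actions.
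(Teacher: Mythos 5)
Your argument is correct, but it takes a genuinely different route from the paper. The paper reduces the whole statement to Godement's criterion: it considers $\Lambda\colon\Gamma_\alpha\ni(g,x)\mapsto(x,\alpha_g(x))\in M\times M$, uses properness to conclude that its image (the graph of the orbit equivalence relation) is closed, and uses freeness together with the constant-rank hypothesis to see that each orbit map $\alpha^x$ is an injective immersion, whence $\Lambda$ is a proper injective immersion, its image is a closed embedded submanifold of $M\times M$, and ${\rm pr}_1$ restricted to it is a submersion; Godement's theorem then delivers the unique differentiable structure in one stroke. You instead transplant the proof of the classical quotient manifold theorem: Hausdorffness from closedness of the relation plus openness of $\pi_G$, a local slice obtained from the rank theorem applied to the injective constant-rank orbit map, properness to shrink the slice so that it meets each orbit at most once, and slice charts with smooth transitions. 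Both proofs consume the hypotheses in exactly the same way --- freeness plus constant rank make the orbit maps injective immersions, properness supplies the separation --- so neither is more general; what yours buys is a self-contained construction with explicit charts and local smooth sections of $\pi_G$ (and the dimension count $\dim M/G=\dim M-\dim G$), at the cost of the slice-theorem bookkeeping that Godement's criterion packages away. One detail worth making explicit in your write-up: since $\alpha$ is graph open, $G^{x_0}$ is a nonempty open subset of $G$ contained in $d^{-1}(p(x_0))$, which forces $G_0$ to be discrete; hence the anchor $p$ is locally constant, a sufficiently small slice $S$ through $x_0$ lies entirely in $M_{p(x_0)}$, and the pairs $(p(x_0),y)$ with $y\in S$ really do lie in $\Gamma_\alpha$, so your map $\mu$ is defined on an open neighborhood of $(e,x_0)$ in $G\times S$ of dimension $\dim G+\dim S=\dim M$, as the local diffeomorphism argument requires.
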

\begin{proof} 
{ It is enough to verify that $\Lambda(\Gamma_\alpha)=\{(x,\alpha_g(x))\mid (g,x)\in \Gamma_\alpha\}$ is a proper immersed submanifold of $M\times M,$ and that the first projection ${\rm pr}_1: \Lambda(\Gamma_\alpha)\to M$   is a submersion thanks to Godement's criterium (see~\cite[Theorem 2, p. 92]{Serre})}. As $\Lambda$ is proper, $\Lambda(\Gamma_\alpha)$ is closed in $M\times M$ {(see~\cite[Theorem A.57]{Lee})}. So, to guarantee that $\Lambda(\Gamma_\alpha)$ is a proper immersed submanifold of $M\times M$ it is enough to show that $\Lambda$ is an immersion. Note that $\Lambda$ is smooth and  by  \eqref{edife} we have
$$d\Lambda(g,x)(v,w)=(w,d\alpha^{x}(g)v+d\alpha_{g}(x)w), $$
for each $(v,w)\in T_{g}G\times T_{x}M$. Then, 
$\Lambda$ is an immersion if and only if $\alpha^x$ is an immersion for every $x\in M$. Since $\alpha$ is free, for all $x\in M$  we have that $\alpha^x$ is injective, and therefore an immersion, as it has a constant rank, as required. Finally,  the first projection ${\rm pr}_1: \Lambda(\Gamma_\alpha)\to M$ is  a submersion, which finishes the proof.
\end{proof}

Our last result in this work is the following  Corollary, which is a consequence of Proposition~\ref{globalrank} and Theorem~\ref{th3}.
\begin{cor}\label{cf}
{\rm Let $G$ be a Lie groupoid, $M$ a smooth manifold, and {$\alpha=(\alpha_g, M_g)_{g\in G}$ be a graph open smooth partial action of $G$ on $M$, free and proper.} If $\alpha$ has a smooth globalization, then there exists exactly one differentiable structure on $M/G$ such that the quotient map $\pi_G: M\to M/G$ is a submersion.}
\end{cor}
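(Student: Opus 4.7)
The plan is to obtain this corollary as an immediate chaining of two results already established in the paper, namely Corollary~\ref{cor_constantrank} and Theorem~\ref{th3}. The argument requires no new construction; it is simply a matter of verifying that the existence of a smooth globalization supplies the one non-trivial hypothesis of Theorem~\ref{th3} that is not already part of the standing assumptions (freeness and properness, as indicated in the introduction and in the companion Theorem~\ref{th3}).

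First, I would invoke Corollary~\ref{cor_constantrank}. The hypothesis that $\alpha$ admits a smooth globalization is exactly what that corollary needs in order to conclude that for every $x\in M$ the orbit map $\alpha^{x}:G^{x}\to M$ has constant rank. The mechanism underlying Corollary~\ref{cor_constantrank} is Proposition~\ref{globalrank}, which furnishes the constant-rank property for orbit maps of any smooth \emph{global} action via the transitive action $\gamma$ of $G$ on $G^{x}$. Concretely, if $\beta$ is a smooth globalization of $\alpha$, then $G^{x}_{\alpha}$ is an open subset of $G^{x}_{\beta}=d^{-1}(p(x))$ and $\alpha^{x}$ coincides with the restriction of $\beta^{x}$ to this open set; since $\beta^{x}$ has constant rank by Proposition~\ref{globalrank}, so does $\alpha^{x}$.

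With constant rank of every orbit map in hand, together with the freeness and properness of $\alpha$ carried over from the hypotheses of Theorem~\ref{th3}, all three hypotheses of Theorem~\ref{th3} are now available. Applying that theorem directly produces the unique differentiable structure on the orbit equivalence space $M/G$ for which the quotient map $\pi_{G}:M\to M/G$ is a submersion, which is the desired conclusion.

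No substantive obstacle is expected in this proof: it reduces to a one-step composition of two already-proved results. The only conceptual point worth highlighting is that the role of the smooth globalization is purely to certify the constant-rank property of the orbit maps; once that is in place, the existence and uniqueness of the differentiable structure on $M/G$ follow immediately from the Godement-type argument carried out in the proof of Theorem~\ref{th3}.
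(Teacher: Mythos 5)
Your proposal is correct and follows exactly the route the paper intends: the paper gives no separate proof but states that the corollary ``is a consequence of Proposition~\ref{globalrank} and Theorem~\ref{th3}'', i.e.\ the smooth globalization yields constant rank of every orbit map (via Corollary~\ref{cor_constantrank}/Proposition~\ref{globalrank}), after which Theorem~\ref{th3} applies. You also rightly flag that the statement of Corollary~\ref{cf} omits the freeness and properness hypotheses that Theorem~\ref{th3} requires; these are clearly intended (they appear in the abstract and introduction), so importing them as standing assumptions is the correct reading.
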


\noindent{\bf Declarations:}\vspace{0.5cm}

 \noindent The authors have no conflict of interest to declare that are relevant to this article.


\small

\end{document}